\documentclass[11pt, reqno]{amsart}
\usepackage{enumitem}
\makeatletter
\newcommand{\mylabel}[2]{#2\def\@currentlabel{#2}\label{#1}}
\makeatother
\usepackage{array}
\usepackage[english]{babel}
\usepackage[top=1in, bottom=1in, left=1in, right=1in]{geometry}
\newcolumntype{P}[1]{>{\centering\arraybackslash}p{#1}}

\expandafter\let\csname ver@amsthm.sty\endcsname\relax

\usepackage{tikz}

\usepackage{amsmath}
\usepackage{amssymb}
\usepackage{mathdots}
\usepackage{mathtools}

\usepackage{dsfont}
\usepackage{chngpage}

\usepackage{hyperref}
\usepackage{amsthm}
\usepackage[capitalize,noabbrev]{cleveref}

\allowdisplaybreaks

\numberwithin{equation}{section}

\newtheorem{thm}{Theorem}[section]
\newtheorem{lemma}[thm]{Lemma}

\newtheorem{con}[thm]{Conjecture}

\newtheorem{Example}[thm]{Example}

\newtheorem{Remark}[thm]{Remark}

\crefname{thm}{Theorem}{Theorems}
\crefname{lemma}{Lemma}{Lemmas}
\crefname{cor}{Corollary}{Corollaries}
\crefname{prop}{Proposition}{Propositions}

\crefname{example}{Example}{Examples}
\crefname{remark}{Remark}{Remarks}

\newcommand{\emailhref}[1]{\email{\href{#1}{#1}}}

\newcommand{\Zgtwo}{{\mathbb{Z}}_{\geq 2}}

\title[Generalized Glasby-Paseman sequences]{Unimodality and log-concavity of Generalized Glasby-Paseman sequences}

\author[Seok Hyun Byun]{Seok Hyun Byun}\emailhref{sbyun@amherst.edu}
\address{Department of Mathematics, Amherst College, Amherst, MA, U.S.A.}
\thanks{S.H.B. was supported in part by an AMS-Simons Travel Grant.}

\author[Svetlana Poznanovi\'c]{Svetlana Poznanovi\'c}\emailhref{spoznan@clemson.edu}
\address{School of Mathematical and Statistical Sciences, Clemson University, Clemson, SC, U.S.A.}
\thanks{S.P. was supported by Simons Foundation gift MP-TSM-00002798.}

\begin{document}

\begin{abstract}
In this paper, we consider a two-parameter ($l$ and $a$) generalization of a sequence that Glasby and Paseman considered. Based on computer experiments, we conjecture its unimodality, log-concavity, peak positions, and the asymptotic behavior of the maximum values. Then we prove this conjecture for the case where $l=2$ and $a=1$. We finish the paper by making some comments about the conjecture on the generalized sequence.
\end{abstract}

\maketitle

\section{Introduction}  \label{sec:Intro}

A sequence of real numbers $(x_{0},\ldots,x_{d})$ is \textit{unimodal} if it satisfies $x_{0}\leq\cdots\leq x_{j-1}\leq x_{j}\geq x_{j+1}\geq\cdots\geq x_{d}$ for some integer $j$. Many sequences in mathematics are unimodal, and proving their unimodality often requires new ideas, as there is no uniform way to prove the unimodality of a sequence. A sequence of nonnegative real numbers $\{y_{n}\}_{n\geq0}$ is \textit{log-concave} if $y_{i}^{2}\geq y_{i-1}y_{i+1}$ for every positive integer $i$. It is known that if a sequence is log-concave, it is unimodal. However, in general, establishing the log-concavity of sequences is also not straightforward. For readers who are interested in a general overview of these topics, we refer to \cite{MR3409348},\cite{brenti1989unimodal},\cite{brenti1994update}, and \cite{stanley1989log}.

In \cite{glasby2022maximum}, Glasby and Paseman considered the following interesting sequence:
\begin{equation}\label{GP1}
    \bigg\{\frac{1}{2^r}\sum_{i=0}^{r}\binom{m}{i}\bigg\}_{0\leq r\leq m}
\end{equation}
for nonnegative integers $m$. They showed that this sequence is unimodal and attains its maximum value at $r=\big\lfloor\frac{m}{3}\big\rfloor+1$ for all nonnegative integers $m$, except $m=0,3,6,9, \text{ and } 12$. Later, the authors \cite{byun2024maximum} considered a one-parameter generalization of this sequence:
\begin{equation}\label{BP1}
    \bigg\{\frac{1}{(1+a)^r}\sum_{i=0}^{r}\binom{m}{i}a^i\bigg\}_{0\leq r\leq m}
\end{equation}
for integers $m\geq2$ and $a\geq1$ and showed that this sequence is log-concave (and thus unimodal) and attains its maximum value at $r=\big\lfloor\frac{am+a+2}{2a+1}\big\rfloor$ for all $m$, except $m= 3, 2a+4, 4a+5$ (when $a=1$, we also exclude $m=6a+6=12$). See also \cite{glasby2024maximizing} for a recent work of Glasby and Paseman that generalized the sequence \eqref{GP1} in a different direction.

In this paper, we consider a two-parameter generalization of \eqref{GP1}, whose one-parameter deformation corresponds to \eqref{BP1}. Observe that the entry of the sequence \eqref{BP1} can be rewritten as follows:
\begin{equation}\label{BP2}
    \frac{1}{(1+a)^r}\sum_{i=0}^{r}\binom{m}{i}a^i=\frac{\sum_{i=0}^{r}\binom{m}{i}a^{i}}{\sum_{i=0}^{r}\binom{r}{i}a^{i}}.
\end{equation}
This equivalent expression allows one to easily generalize the sequence, as the numerator and denominator of \eqref{BP2} look very similar: they are both polynomials in $a$ with the same degree, and their coefficients exhibit similar patterns. Recall that for nonnegative integers $l$, $p_{l}(x_0,\ldots,x_r)\coloneqq \sum_{i=0}^{r}x_{i}^l$ is the $l$-th power sum symmetric polynomial. Motivated by the expression \eqref{BP2}, one can consider the following sequence
\begin{equation}\label{BP3}
    \bigg\{\frac{\sum_{i=0}^{r}\{\binom{m}{i}a^{i}\}^{l}}{\sum_{i=0}^{r}\{\binom{r}{i}a^{i}\}^{l}}\bigg\}_{0\leq r\leq m}=\bigg\{\frac{p_{l}(\binom{m}{0}a^{0},\ldots,\binom{m}{r}a^{r})}{p_{l}(\binom{r}{0}a^{0},\ldots,\binom{r}{r}a^{r})}\bigg\}_{0\leq r\leq m}
\end{equation}
for positive integers $m$ and $l$, and a positive real number $a$. For this expanded family of sequences, one can still ask the same questions: When is this sequence log-concave or unimodal? If the sequence is unimodal, what is the location of the maximum? What can we say about the limiting behavior of the maximum value? Based on computer experiments, we make the following conjecture. 

\begin{con}\label{Conjecture}
The sequence \eqref{BP3} satisfies the following properties.
\begin{enumerate}
    \item[(a)] The sequence \eqref{BP3} is unimodal for all $m,l\in \mathbb{Z}^{+}$ and $a\in\mathbb{R}^{+}$.
    \item[(b)] For $l\in \mathbb{Z}^{+}$ and $a\in\mathbb{R}^{+}$, the sequence \eqref{BP3} is log-concave for all but finitely many $m\in \mathbb{Z}^{+}$.
    \item[(c)] For $l,a\in \mathbb{Z}^{+}$ and $m\in \Zgtwo$, the sequence \eqref{BP3} attains its unique maximum value at an index $r$ that is equal to one of these three consecutive values: $\big\lfloor\frac{am+a+2}{2a+1}\big\rfloor-1$, $\big\lfloor\frac{am+a+2}{2a+1}\big\rfloor$, $\big\lfloor\frac{am+a+2}{2a+1}\big\rfloor+1$. 
    \item[(d)] For $l\in \mathbb{Z}^{+}$ and $a\in\mathbb{R}^{+}$, the maximum value of the sequence \eqref{BP3} is asymptotically\footnote{It means that the ratio between the maximum value of the sequence \eqref{BP3} and the expression given in Conjecture \ref{Conjecture} (d) approaches $1$ as $m$ approaches $\infty$.}
    \begin{equation*}
        \frac{l^{\frac{1}{2}}}{\sqrt{2\pi m}}\cdot\frac{(1+2a)^{\frac{1}{2}}(1+a)a^{\frac{l-2}{2}}}{(1+a)^l-1}\cdot\Big(\frac{1+2a}{1+a}\Big)^{(m+\frac{1}{2})l}.
    \end{equation*}
\end{enumerate}
\end{con}

The results in~\cite{glasby2022maximum} and ~\cite{byun2024maximum} confirm the conjecture for  $l=1$, $a=1$ and $l=1$, $a \in \mathbb{Z}^{+}$, respectively. In this paper, we confirm the conjecture for $l=2$, $a=1$. Note that in this case, using a combinatorial identity $\sum_{i=0}^{r}\binom{r}{i}^2=\sum_{i=0}^{r}\binom{r}{i}\binom{r}{r-i}=\binom{2r}{r}$, the sequence \eqref{BP3} can be rewritten as follows:
\begin{equation}\label{BP4}
    \bigg\{\frac{\sum_{i=0}^{r}\binom{m}{i}^{2}}{\sum_{i=0}^{r}\binom{r}{i}^{2}}\bigg\}_{0\leq r\leq m}=\bigg\{\frac{1}{\binom{2r}{r}}\sum_{i=0}^{r}\binom{m}{i}^{2}\bigg\}_{0\leq r\leq m}\eqqcolon\big\{g_{m}(r)\big\}_{0\leq r\leq m}.
\end{equation}
The main theorem of this paper is the following\footnote{The cases \(m=0\) and \(m=1\) yield the trivial sequences \((1)\) and \((1,1)\), respectively, so we restrict attention to \(m \ge 2\).}.
\begin{thm}\label{MainThm}
    For an integer $m\geq2$, 
    \begin{enumerate}
        \item[(a)] The sequence $\big\{g_{m}(r)\big\}_{0\leq r\leq m}$ is log-concave and thus unimodal.
        \item[(b)] This sequence attains its unique maximum value at $r=r_{m}\coloneqq\big\lfloor\frac{m+2}{3}\big\rfloor$.
        \item[(c)] The maximum value $g_{m}(r_{m})$ satisfies
        \begin{equation}
        \lim_{m\rightarrow\infty}g_{m}(r_{m})\cdot\frac{2^{2m}\sqrt{\pi m}}{3^{2m+\frac{1}{2}}}=1.
        \end{equation}
    \end{enumerate}
\end{thm}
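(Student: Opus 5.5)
Write $S_m(r)=\sum_{i=0}^{r}\binom{m}{i}^2$ and $c_r=\binom{2r}{r}$, so that $g_m(r)=S_m(r)/c_r$. The basic tool is the ratio
\begin{equation*}
\frac{g_m(r+1)}{g_m(r)}=\frac{S_m(r+1)}{S_m(r)}\cdot\frac{r+1}{2(2r+1)},
\end{equation*}
which uses $c_{r+1}/c_r=2(2r+1)/(r+1)$.

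\textbf{Part (a).} Log-concavity of $g_m$ is equivalent to
\begin{equation*}
\frac{S_m(r+1)^2}{S_m(r)S_m(r+2)}\;\ge\;\frac{c_{r+1}^2}{c_rc_{r+2}}=\frac{(2r+1)(r+2)}{(r+1)(2r+3)}=1-\frac{1}{(r+1)(2r+3)}.
\end{equation*}
The partial sums $S_m(r)$ of the log-concave sequence $\binom{m}{i}^2$ are themselves log-concave, so the left side is always at least $1$. Hence the inequality holds with room to spare, and we need only the trivial bound $S_m(r+1)^2\ge S_m(r)S_m(r+2)$. I would justify that bound by the standard fact that partial sums of a log-concave sequence of positive terms are log-concave: $S(r+1)^2-S(r)S(r+2)=S(r+1)x_{r+1}-S(r)x_{r+2}$, where $x_i=\binom{m}{i}^2$, and this is nonnegative because $x_{i}x_{r+1}\ge x_{i-1}x_{r+2}$ termwise. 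The range $r\le m-2$ covers all indices. Since the sequence is log-concave with positive terms, it is unimodal.

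\textbf{Part (b).} By (a), it suffices to show $g_m(r_m)>g_m(r_m-1)$ and $g_m(r_m)>g_m(r_m+1)$. Each is a comparison $S_m(r+1)(r+1)\gtrless 2(2r+1)S_m(r)$, which is equivalent to
\begin{equation*}
(r+1)\binom{m}{r+1}^2 \gtrless (3r+1)\,S_m(r).
\end{equation*}
So I need sharp two-sided bounds on $S_m(r)/\binom{m}{r}^2$ for $r\approx m/3$. Write $q_i=\binom{m}{i-1}^2/\binom{m}{i}^2=\bigl(i/(m-i+1)\bigr)^2$; near $i\approx m/3$ we have $q_i\approx 1/4$. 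Then
\begin{equation*}
S_m(r)/\binom{m}{r}^2=1+q_r+q_rq_{r-1}+\cdots .
\end{equation*}
This sum is bounded above by the geometric series $1/(1-q_r)$, since the $q_i$ decrease as $i$ decreases. For the lower bound I would truncate after a few terms. The comparison then reduces to explicit rational inequalities in $m$ for each residue class $m\bmod 3$ (so that $r_m=k$ with $m=3k-2,3k-1,3k$). These can be checked by clearing denominators, with small $m$ handled by direct computation.

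\textbf{Part (c).} Since $q_i\to 1/4$ uniformly for $i$ within $o(m)$ of $m/3$, and the tail decays geometrically, we get $S_m(r_m)\sim \tfrac{4}{3}\binom{m}{r_m}^2$. Stirling's formula gives $\binom{2r}{r}\sim 4^r/\sqrt{\pi r}$. For $\binom{m}{r_m}$ with $r_m=m/3+O(1)$, the entropy form $\binom{m}{r}\sim \frac{1}{\sqrt{2\pi m\cdot\frac{1}{3}\cdot\frac{2}{3}}}\,\frac{3^m}{2^{2m/3}}\,\bigl(\tfrac{2}{1}\bigr)^{\,m/3-r}$ holds with the $O(1)$ offset, and this offset must be tracked exactly. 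The $2^{-2r}$ from $c_r$ combines with the $2^{2(m/3-r)}$ from the squared binomial, so the $r$-dependence cancels. That cancellation is why the limit is clean. What remains is to collect the constants and verify the stated normalisation $3^{2m+1/2}/(2^{2m}\sqrt{\pi m})$.

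\textbf{Main obstacle.} I expect the hardest part to be (b). There the inequalities are tight at ratio $\approx 1$, so the bounds on $S_m(r)/\binom{m}{r}^2$ must be accurate to $O(1/m^2)$ relative error, and the three residue cases all require careful, if elementary, polynomial inequalities.
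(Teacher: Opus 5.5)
Your part (a) is essentially the paper's argument. Your geometric upper bound does prove $g_m(r_m-1)<g_m(r_m)$: clearing denominators in $(3k-2)q_k<k(1-q_{k-1})$ leaves $(k-1)^2(4k-1)>0$, $20k^2+5k+2>0$ and $36k^3+59k^2+30k+3>0$ for $m=3k-2,\,3k-1,\,3k$. That is a shorter route than the induction of Lemma~\ref{Lemma3.2}.

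The gap is in the other inequality, $g_m(r_m)>g_m(r_m+1)$, where you bound $S_m(r_m)$ from below by truncating $1+q_r+q_rq_{r-1}+\cdots$ after a few terms. This cannot work. For $m=3k$ put $T_k=S_{3k}(k)/\binom{3k}{k}^2$. The required inequality is $T_k>\frac{4k^2}{(3k+1)(k+1)}=\frac43-\frac{16}{9k}+O(k^{-2})$, whereas $T_k=\frac43-\frac{8}{9k}+O(k^{-2})$. So the slack is $\Theta(1/m)$, not $O(1/m^2)$ as you suggest, and you need a lower bound that is accurate to first order with a good constant. If you keep a fixed number $N$ of terms, the truncated sum tends to $\frac43(1-4^{-N})$, while the corresponding right-hand side tends to $\frac43$ in every residue class. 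Hence for every fixed $N$ the rational inequality you would end up checking is false for all large $m$. Concretely, with three terms it already fails at $m=300$: $1+q_{100}+q_{100}q_{99}\approx 1.3070$ against $\frac{40000}{30401}\approx 1.3157$. Your upper bound works precisely because the geometric series is first-order accurate; a truncation has an $O(1)$ error.

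To repair this you must control every term of the series. For example, $\frac{2(k-j)}{2k+1+j}\ge 1-\frac{3j+1}{2k}$ gives $q_{k-j}\ge\frac14\bigl(1-\frac{3j+1}{k}\bigr)$. Then $\prod(1-a_j)\ge 1-\sum a_j$ yields $\prod_{j<n}q_{k-j}\ge 4^{-n}\bigl(1-\frac{3n^2-n}{2k}\bigr)$, trivially so when the right side is $\le 0$. Hence $T_k\ge\sum_{n\ge 0}4^{-n}\bigl(1-\frac{3n^2-n}{2k}\bigr)=\frac43-\frac{8}{9k}$. This beats $\frac{4k^2}{(3k+1)(k+1)}$ exactly when $6k^2-5k-2>0$, i.e. for $k\ge 2$ (check $m=3$ by hand). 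Lemma~\ref{Lemma3.3} then transfers the inequality from $m=3k$ to $m=3k-1$ and $3k-2$.

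The paper avoids approximation entirely. After the same reduction to $m=3q$, it unrolls the recursion exactly down to $i=0$, packages the outcome as $X_{q+1}(q)/Y_{q+1}(q)$, and shows $X_{q+1}(q)<0$ via the coefficient bounds of Lemma~\ref{Lemma3.7}.

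There is also a slip in (c). Near $r=m/3$ one has $\binom{m}{r}\propto 2^{\,r-m/3}$, not $2^{\,m/3-r}$. Only with the correct sign does the $r$-dependence cancel against $\binom{2r}{r}^{-1}\sim\sqrt{\pi r}\,4^{-r}$, after which the constants do give $3^{2m+1/2}/(2^{2m}\sqrt{\pi m})$.
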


While our proof of the above theorem follows the idea used in \cite{glasby2022maximum} and \cite{byun2024maximum}, the proof is more involved due to the complexity of the entries (their numerators involve sums of squares of binomial coefficients, rather than simple sums as in \cite{glasby2022maximum} and \cite{byun2024maximum}). Also, it turns out that the simplification of the denominator, $\sum_{i=0}^{r}\binom{r}{i}^2=\binom{2r}{r}$, is crucial in our proof of Theorem \ref{MainThm}. In particular, if 1) $l=2$ and $a\neq1$ or 2) $l\geq3$, the expression $\sum_{i=0}^{r}\{\binom{r}{i}a^i\}^l=p_{l}(\binom{r}{0}a^0,\ldots,\binom{r}{r}a^r)$, which is the denominator in \eqref{BP3}, does not seem to have a closed-form expression and makes the conjecture difficult to tackle (for example, when $3\leq l\leq 9$ and $a=1$, it is possible to show that this sum does not have closed form using techniques in \cite{petkovsek1996}. See \cite{calkin1998}).

This paper is organized as follows. In Section 2, we first show the log-concavity of the sequence \eqref{BP4} and also show that the sequence reaches its maximum value at $r=r_{m}=\big\lfloor\frac{m+2}{3}\big\rfloor$. In Section 3, we analyze the asymptotic behavior of the maximum value of this sequence $g_{m}(r_{m})$ as $m$ approaches infinity. In Section 4, we comment on Conjecture \ref{Conjecture} and the generalized sequence \eqref{BP3}.

\section{Log-concavity and peak location of the sequence}

Log-concavity of the sequence $\big\{g_{m}(r)\big\}_{0\leq r\leq m}$ can be verified using the following lemma, which was proved and used in \cite{byun2024maximum} to show the log-concavity of the sequence \eqref{BP1}.

\begin{lemma}\label{Lemma2.1}
    Let $\{x_{i}\}_{i\geq0}$ and $\{y_{i}\}_{i\geq0}$ be two sequences of nonnegative real numbers that are log-concave. For each nonnegative integer $k$, we set $z_{k}\coloneqq\sum_{i=0}^{k}x_{i}y_{i}$. Then, the sequence $\{z_{i}\}_{i\geq0}$ is log-concave.
\end{lemma}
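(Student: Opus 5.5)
The plan is to reduce the statement to two facts: the termwise product of two log-concave sequences is log-concave, and the sequence of partial sums of a log-concave sequence is log-concave. As is standard in this setting (and needed for the second fact), I will take ``log-concave'' to include the condition of \emph{no internal zeros}. This means the support $\{i : x_i>0\}$ is an interval. Without this condition the statement fails: $w=(1,0,0,1)$ satisfies $w_i^2\ge w_{i-1}w_{i+1}$, but its partial sums $1,1,1,2$ do not.

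\emph{Step 1: products.} Set $w_i\coloneqq x_iy_i\ge 0$. Multiplying the inequalities $x_i^2\ge x_{i-1}x_{i+1}$ and $y_i^2\ge y_{i-1}y_{i+1}$, which is legitimate since all quantities are nonnegative, gives $w_i^2\ge w_{i-1}w_{i+1}$. The support of $w$ is the intersection of two intervals, hence an interval. So $w$ is log-concave with no internal zeros.

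\emph{Step 2: a ratio inequality.} On the support of $w$, log-concavity says exactly that the ratios $w_{i+1}/w_i$ are nonincreasing in $i$. From this I will derive
\[
w_k w_{i+1}\ \ge\ w_{k+1} w_i \qquad (0\le i\le k-1).
\]
This follows by chaining the ratio monotonicity when $w_i,w_k>0$. If $w_k=0$, then either $w_{k+1}=0$, or all indices up to $k$ lie to the left of the support; in the latter case $w_i=0$. The case $w_i=0$ is trivial. This boundary bookkeeping with zeros is the only delicate point, and the no-internal-zeros hypothesis is exactly what makes it work.

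\emph{Step 3: partial sums.} Write $z_{k+1}=z_k+w_{k+1}$ and $z_{k-1}=z_k-w_k$. A one-line expansion gives
\[
z_k^2-z_{k-1}z_{k+1}=w_kz_k-w_{k+1}z_{k-1}
=w_kw_0+\sum_{i=0}^{k-1}\bigl(w_kw_{i+1}-w_{k+1}w_i\bigr).
\]
Every summand on the right is nonnegative by Step 2, and $w_kw_0\ge 0$. Hence $z_k^2\ge z_{k-1}z_{k+1}$ for all $k\ge1$.

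Finally, the support of $z$ is $\{k : k\ge \min\operatorname{supp} w\}$, which is an interval, so $z$ also has no internal zeros. This completes the plan; I expect no step beyond the zero cases in Step 2 to be an obstacle.
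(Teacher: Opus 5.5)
Your proof is correct and complete. The paper gives no proof of this lemma; it imports it from \cite{byun2024maximum}. So there is no in-paper argument to match, and what you have written is a self-contained proof.

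Two standard routes exist for the partial-sum step:
\begin{enumerate}
\item[(i)] Write $z=w*(1,1,1,\dots)$ and invoke the theorem that a convolution of log-concave sequences without internal zeros is log-concave.
\item[(ii)] Prove the equivalent inequality $w_kz_k\ge w_{k+1}z_{k-1}$ by induction on $k$.
\end{enumerate}
Your identity $w_kz_k-w_{k+1}z_{k-1}=w_kw_0+\sum_{i=0}^{k-1}(w_kw_{i+1}-w_{k+1}w_i)$ is essentially the unrolled form of (ii). It avoids both the convolution theorem and an induction. It also confines all the zero bookkeeping to the single pairwise inequality of Step 2, and you handle the cases $w_i=0$, $w_k=0$, and $w_{k+1}=0$ correctly.

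Your caveat about internal zeros is correct and is a genuine correction to the paper. The paper defines log-concavity by $y_i^2\ge y_{i-1}y_{i+1}$ alone. Under that definition, $x=y=(1,0,0,1,0,0,\dots)$ gives $z=(1,1,1,2,2,\dots)$ with $z_2^2=1<2=z_1z_3$, so the lemma as literally stated is false. The introduction's claim that log-concave sequences are unimodal fails for the same sequence. The paper's application is unaffected, because $x_i=y_i=\binom{m}{i}$ is positive for $0\le i\le m$ and zero afterwards, so it has no internal zeros.
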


\begin{proof}[Proof of Theorem \ref{MainThm} (a)]
It is well known (and not hard to check) that the sequence $\{\binom{m}{i}\}_{0\leq i\leq m}$ is log-concave. If we set $x_{i}=y_{i}=\binom{m}{i}$ in Lemma \ref{Lemma2.1}, we can conclude that the sequence $\{\sum_{i=0}^{r}\binom{m}{i}^2\}_{0\leq i\leq m}$ is log-concave.
It is straightforward to check that the sequence $\{\binom{2r}{r}^{-1}\}_{0\leq r\leq m}$ is also log-concave, using the definition of log-concavity: for any integer $r$ such that $1\leq r\leq m-1$,
\begin{equation*}
    \frac{\big(\binom{2r}{r}^{-1}\big)^2}{\binom{2r-2}{r-1}^{-1}\binom{2r+2}{r+1}^{-1}}=\frac{(r!)^{4}(2r-2)!(2r+2)!}{(2r!)^{2}((r-1)!)^2((r+1)!)^2}=\frac{r(2r+1)}{(r+1)(2r-1)}=\frac{2r^{2}+r}{2r^{2}+r-1}>1.
\end{equation*}
Since the product of two log-concave sequences is also log-concave, the sequence $\big\{g_{m}(r)\big\}_{0\leq r\leq m}=\{\binom{2r}{r}^{-1}\sum_{i=0}^{r}\binom{m}{i}^2\}_{0\leq r\leq m}$ is also log-concave. Since every log-concave sequence of nonnegative numbers is unimodal, the sequence $\big\{g_{m}(r)\big\}_{0\leq r\leq m}$ is also unimodal.
\end{proof}

To prove part (b) of Theorem~\ref{MainThm}, since the sequence is unimodal, it suffices to show the following two inequalities:
\begin{equation}\label{Inequality1:Prop3.1}
    g_{m}(r_{m}-1) < g_{m}(r_{m})
\end{equation}
and
\begin{equation}\label{Inequality2:Prop3.1}
    g_{m}(r_{m}) > g_{m}(r_{m}+1)
\end{equation}
for every integer $m\geq2$ and $r_{m}=\big\lfloor\frac{m+2}{3}\big\rfloor$.

We first discuss~\eqref{Inequality1:Prop3.1}. Note that
\begin{equation}\label{Equivalence1:Prop3.1}
    g_{m}(r_{m}-1) < g_{m}(r_{m}) \iff (3r_{m}-2)\sum_{i=0}^{r_{m}-1}\binom{m}{i}^2 < r_{m}\binom{m}{r_{m}}^2.
\end{equation}
The following lemma will be useful to verify the right-hand side of~\eqref{Equivalence1:Prop3.1}.
\begin{lemma}\label{Lemma3.2}
    For $m,j\in\mathbb{Z}^{+}$ such that $m\geq 2$ and $1\leq j\leq r_{m}=\big\lfloor\frac{m+2}{3}\big\rfloor$, we have
\begin{equation}\label{Inequality1:Lemma3.2}
    (3r_{m}-2)\sum_{i=0}^{j-1}\binom{m}{i}^2 < r_{m}\binom{m}{j}^2.
\end{equation}    
\end{lemma}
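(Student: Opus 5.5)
Induction on $j$, using the ratio of consecutive squared binomials, mirroring the approach of \cite{glasby2022maximum} and \cite{byun2024maximum}. Write $S_j \coloneqq \sum_{i=0}^{j-1}\binom{m}{i}^2$ and $r=r_m$. For $j=1$, \eqref{Inequality1:Lemma3.2} reads $3r-2<rm^2$, which is immediate since $m\ge2$ gives $rm^2\ge 4r>3r-2$.

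For the inductive step, suppose $(3r-2)S_j<r\binom{m}{j}^2$ for some $1\le j\le r-1$. Then
\begin{equation*}
(3r-2)S_{j+1}=(3r-2)S_j+(3r-2)\binom{m}{j}^2<(4r-2)\binom{m}{j}^2,
\end{equation*}
so it suffices to show
\begin{equation*}
(4r-2)\binom{m}{j}^2\le r\binom{m}{j+1}^2,\qquad\text{that is,}\qquad \Big(\frac{m-j}{j+1}\Big)^2\ge \frac{4r-2}{r}.
\end{equation*}
The left side is decreasing in $j$, so the worst case is $j=r-1$, where we need
\begin{equation*}
\Big(\frac{m-r+1}{r}\Big)^2\ge 4-\frac{2}{r}.
\end{equation*}
Since $r\le (m+2)/3$, we have $m\ge 3r-2$, hence $(m-r+1)/r\ge (2r-1)/r=2-1/r$. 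Then
\begin{equation*}
(2-1/r)^2=4-4/r+1/r^2,
\end{equation*}
and this is smaller than $4-2/r$ when $r\ge1$. So the crude bound fails, and this is the main obstacle: the naive induction loses too much at the top index $j=r-1$ when $m=3r-2$.

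To fix this, I would strengthen the inductive hypothesis to a bound with a better constant. One option is to prove $S_j\le c_j\binom{m}{j}^2$ with $c_j$ chosen so that $c_{j+1}=(c_j+1)\big(\frac{j+1}{m-j}\big)^2$. A cleaner alternative is to bound $S_j$ by a geometric series. For $i\le j-1$ we have
\begin{equation*}
\binom{m}{i}^2\le \rho^{\,j-i}\binom{m}{j}^2,\qquad \rho\coloneqq\Big(\frac{j}{m-j+1}\Big)^2,
\end{equation*}
since the ratios $\big(\frac{i}{m-i+1}\big)^2$ increase in $i$. Summing gives
\begin{equation*}
S_j\le \frac{\rho}{1-\rho}\binom{m}{j}^2.
\end{equation*}
So it is enough to check $(3r-2)\rho<r(1-\rho)$, i.e.\ $\rho<r/(4r-2)$, and the worst case is $j=r$. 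With $m\ge 3r-2$ we get $\rho\le \big(\frac{r}{2r-1}\big)^2$, and the required inequality $\frac{r^2}{(2r-1)^2}<\frac{r}{4r-2}$ becomes $2r<2r-1$, which is false. So the geometric bound is also borderline-tight, and this is where the real work lies.

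My plan is therefore a hybrid. I keep the first few terms $i=j-1, j-2, \dots$ exact, using their explicit ratios, and bound only the tail geometrically. Concretely:
\begin{equation*}
S_j=\binom{m}{j}^2\big(q_1+q_1q_2+\cdots\big),\qquad q_k\coloneqq\Big(\frac{j-k+1}{m-j+k}\Big)^2,
\end{equation*}
and the $q_k$ decrease noticeably in $k$. I would bound this by $q_1+q_1q_2+q_1q_2q_3/(1-q_3)$ and verify the resulting rational inequality in $r$ for each residue $m\equiv 0,1,2 \pmod 3$, taking $j=r$ as the worst case. Small $m$, where the asymptotic slack is absent, I would check directly by computer.
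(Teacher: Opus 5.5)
Your final hybrid argument is correct, and it takes a genuinely different route from the paper's. The paper checks $m\le 6$ directly. For $m\ge 7$ it runs a two-step induction on $j$:
it applies the lemma itself at $j=k-1$ to control $\sum_{i=0}^{k-2}\binom{m}{i}^2$, keeps $\binom{m}{k-1}^2$ and $\binom{m}{k}^2$ exact, shows the resulting rational function of $k$ is increasing, and verifies $k=r_m-1$ separately for $m=3q-2,3q-1,3q$.

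Your argument is not inductive at all. You keep the top two terms exact and replace the tail by a geometric series of ratio $q_3$. To complete it, say explicitly that the bound $q_1+q_1q_2+\frac{q_1q_2q_3}{1-q_3}$ is nondecreasing in $j\in[1,r]$ and nonincreasing in $m$. It is exact for $j\le 2$ because $q_{j+1}=0$, and for $j\ge 2$ each of $q_1,q_2,q_3$ increases with $j$ and decreases with $m$. Hence the single case $j=r$, $m=3r-2$ is the worst. That case does close: using $1+\frac{q_3}{1-q_3}=\frac{1}{1-q_3}=\frac{(2r+1)^2}{(r+3)(3r-1)}$, the inequality $(3r-2)\bigl(q_1+q_1q_2+\frac{q_1q_2q_3}{1-q_3}\bigr)<r$ clears denominators to $(r-1)(28r^3-35r^2+9r-2)>0$. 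This holds for every $r\ge 2$, and $r=1$ is just $1<m^2$. So neither a computer check nor a split by residues modulo $3$ is needed.

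In fact your geometric attempt was only one term short. The bound $S_j\le\frac{q_1}{1-q_2}\binom{m}{j}^2$ already works, since $q_k\le q_2$ for $k\ge 2$, and at the worst case it reduces to $(r-1)^2(4r-1)>0$.

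Both proofs exploit the phenomenon you identified. At $m=3r-2$, the true ratio $S_r/\binom{m}{r}^2$ and the target $\frac{r}{3r-2}$ both tend to $\frac13=\sum_{k\ge1}4^{-k}$, so all the slack lives in the $1/r$ terms. Your version gives a closed-form bound uniform in $m\ge 2$, with monotonicity in $m$ replacing the three residue computations and the small-$m$ checks. The paper instead stays within the inductive framework of its predecessors by recycling the target inequality as its own tail estimate.
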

\begin{proof}
    One can directly check that inequality \eqref{Inequality1:Lemma3.2} holds for $m=2,\dots,6$. For a fixed integer $m\geq7$, we use induction on $j$ to prove \eqref{Inequality1:Lemma3.2}.  Note that in this case, since $m\geq7$, we have $r_{m}\geq \big\lfloor\frac{7+2}{3}\big\rfloor=3$.  When $j=1$, \eqref{Inequality1:Lemma3.2} becomes
    \begin{equation*}
        3r_{m}-2 < r_{m}m^2 \iff \frac{3r_{m}-2}{r_{m}} < m^2,
    \end{equation*}
    which is true because $\frac{3r_{m}-2}{r_{m}} < 3 < m^2$ for $m\geq 7$.
    Also, when $j=2$, \eqref{Inequality1:Lemma3.2} becomes
    \begin{equation*}
        (3r_{m}-2)(1+m^2) < r_{m}\bigg(\frac{m(m-1)}{2}\bigg)^2 \iff \frac{3r_{m}-2}{r_{m}} < \frac{m^4-2m^3+m^2}{4(m^2+1)}=\frac{m^2-2m}{4}+\frac{m}{2(m^2+1)},
    \end{equation*}
    which is also true because $\frac{3r_{m}-2}{r_{m}} < 3 < \frac{m^2-2m}{4}+\frac{m}{2(m^2+1)}$ for $m\geq 7$.

    Now we assume that \eqref{Inequality1:Lemma3.2} holds for all positive integers $j$ less than or equal to an integer $k$ such that $2\leq k < r_{m}$, and we show that \eqref{Inequality1:Lemma3.2} is true when $j=k+1$. By the induction hypothesis, setting $j=k-1$ in \eqref{Inequality1:Lemma3.2} gives us
    \begin{equation}\label{Inequality2:Lemma3.2}
        (3r_{m}-2)\sum_{i=0}^{k-2}\binom{m}{i}^2 < r_{m}\binom{m}{k-1}^2.
    \end{equation}
    To show that \eqref{Inequality1:Lemma3.2} also hold when $j=k+1$, we need to show:
    \begin{equation}\label{Inequality3:Lemma3.2}
        (3r_{m}-2)\sum_{i=0}^{k}\binom{m}{i}^2 < r_{m}\binom{m}{k+1}^2.
    \end{equation}
    By \eqref{Inequality2:Lemma3.2},
    \begin{equation*}
    \begin{aligned}
        (3r_{m}-2)\sum_{i=0}^{k}\binom{m}{i}^2 &= (3r_{m}-2)\sum_{i=0}^{k-2}\binom{m}{i}^2 + (3r_{m}-2)\bigg(\binom{m}{k-1}^2+\binom{m}{k}^2\bigg)\\
        &< r_{m}\binom{m}{k-1}^2 + (3r_{m}-2)\bigg(\binom{m}{k-1}^2+\binom{m}{k}^2\bigg)\\
        &=(4r_{m}-2)\binom{m}{k-1}^2 + (3r_{m}-2)\binom{m}{k}^2.
    \end{aligned}
    \end{equation*}
    Thus, it is enough to show that the inequality
    \begin{equation*}
        (4r_{m}-2)\binom{m}{k-1}^2 + (3r_{m}-2)\binom{m}{k}^2 \leq r_{m}\binom{m}{k+1}^2
    \end{equation*}
    holds for an integer $k$ such that $2\leq k<r_{m}$.
    If we divide both sides of the above inequality by its right-hand side, we get
    \begin{equation}\label{Inequality4:Lemma3.2}
        \frac{4r_{m}-2}{r_{m}}\cdot\frac{k^2(k+1)^2}{(m-k)^2(m-k+1)^2} + \frac{3r_{m}-2}{r_{m}}\cdot\frac{(k+1)^2}{(m-k)^2} \leq 1.
    \end{equation}
    One can view the left-hand side of \eqref{Inequality4:Lemma3.2} as a rational function in $k$. Note that this rational expression is an increasing function in $k$ when $k$ lies on the interval $[0,m)$, which contains $[2,r_{m})$. Thus, it suffices to show that the inequality is true when $k=r_{m}-1$. If we set $k=r_{m}-1$ in \eqref{Inequality4:Lemma3.2}, we obtain the following inequality:
    \begin{equation}\label{Inequality5:Lemma3.2}
        \frac{r_{m}(r_{m}-1)^2(4r_{m}-2)}{(m-r_{m}+1)^2(m-r_{m}+2)^2} + \frac{r_{m}(3r_{m}-2)}{(m-r_{m}+1)^2} \leq 1.
    \end{equation}
    We check this inequality by considering three cases, depending on the class of $m$ modulo $3$.

    \textbf{Case 1)} $m=3q-2$ for an integer $q\geq3$. In this case, $r_{m}=\big\lfloor\frac{m+2}{3}\big\rfloor=\big\lfloor\frac{3q}{3}\big\rfloor=q$. If we replace $m$ and $r_{m}$ by $3q-2$ and $q$, respectively, in \eqref{Inequality5:Lemma3.2}, then we get
    \begin{equation*}
        \frac{q(q-1)^2(4q-2)}{4q^2(2q-1)^2} + \frac{q(3q-2)}{(2q-1)^2} \leq 1,
    \end{equation*}
    which is true because we can rewrite the left-hand side of this inequality as follows:
    \begin{equation*}
        \frac{q(q-1)^2(4q-2)}{4q^2(2q-1)^2} + \frac{q(3q-2)}{(2q-1)^2} = \frac{16q^4-18q^3+8q^2-2q}{4q^2(2q-1)^2} = 1-\frac{(q-1)^2}{2q(2q-1)^2} < 1.
    \end{equation*}

    \textbf{Case 2)} $m=3q-1$ for an integer $q\geq3$. In this case, $r_{m}=\big\lfloor\frac{m+2}{3}\big\rfloor=\big\lfloor\frac{3q+1}{3}\big\rfloor=q$. If we replace $m$ and $r_{m}$ by $3q-1$ and $q$, respectively, in \eqref{Inequality5:Lemma3.2}, then we get
    \begin{equation*}
        \frac{q(q-1)^2(4q-2)}{4q^2(2q+1)^2} + \frac{q(3q-2)}{4q^2} \leq 1,
    \end{equation*}
    which is true because we can rewrite the left-hand side of this inequality as follows:
    \begin{equation*}
        \frac{q(q-1)^2(4q-2)}{4q^2(2q+1)^2} + \frac{q(3q-2)}{4q^2} = \frac{16q^{4}-6q^{3}+3q^{2}-4q}{4q^2(2q+1)^2}=1-\frac{22q^{2}+q+4}{4q(2q+1)^2}<1.
    \end{equation*}

    \textbf{Case 3)} $m=3q$ for an integer $q\geq3$. In this case, $r_{m}=\big\lfloor\frac{m+2}{3}\big\rfloor=\big\lfloor\frac{3q+2}{3}\big\rfloor=q$. If we replace $m$ and $r_{m}$ by $3q$ and $q$, respectively, in \eqref{Inequality5:Lemma3.2}, then we get
    \begin{equation*}
        \frac{q(q-1)^2(4q-2)}{(2q+1)^2(2q+2)^2} + \frac{q(3q-2)}{(2q+1)^2} \leq 1,
    \end{equation*}
    which is true because we can rewrite the left-hand side of this inequality as follows:
    \begin{equation*}
        \frac{q(q-1)^2(4q-2)}{(2q+1)^2(2q+2)^2} + \frac{q(3q-2)}{(2q+1)^2} = \frac{16q^{4}+6q^{3}+4q^{2}-10q}{(2q+1)^2(2q+2)^2}=1-\frac{(42q^{3}+48q^{2}+34q+4)}{(2q+1)^2(2q+2)^2}<1.
    \end{equation*}
Thus, \eqref{Inequality5:Lemma3.2} is verified for all integers $m\geq7$, and this completes the proof.
\end{proof}

For the inequality \eqref{Inequality2:Prop3.1}, note that 
\begin{equation}\label{Equivalence2:Prop3.1}
    g_{m}(r_{m}) > g_{m}(r_{m}+1) \iff \sum_{i=0}^{r_{m}}\binom{m}{i}^2 > \frac{r_{m}+1}{3r_{m}+1}\binom{m}{r_{m}+1}^2.
\end{equation}
We claim that it suffices to check the right-hand side of~\eqref{Equivalence2:Prop3.1} for all positive integers $m$ that are multiples of $3$. The following lemma will be used to confirm that claim.

\begin{lemma}\label{Lemma3.3}
    For $l,m,$ and $N\in\mathbb{Z}_{\geq0}$ such that $l+1 < m$,
    \begin{equation}\label{ImplicationinLemma3.3}
        \sum_{i=0}^{l}\binom{m}{i}^2 > N\binom{m}{l+1}^2 \implies \sum_{i=0}^{l}\binom{m-1}{i}^2 > N\binom{m-1}{l+1}^2.
    \end{equation}
\end{lemma}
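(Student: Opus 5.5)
Lemma~\ref{Lemma3.3} will follow by comparing the terms of both sides at $m$ and at $m-1$ through their ratios. For $0\le i\le l+1<m$,
\[
\binom{m-1}{i}=\frac{m-i}{m}\binom{m}{i},
\]
so
\[
\binom{m-1}{i}^2=\Big(\frac{m-i}{m}\Big)^2\binom{m}{i}^2 .
\]
The factor $\big(\frac{m-i}{m}\big)^2$ is decreasing in $i$. Hence for every $i\le l$,
\[
\Big(\frac{m-i}{m}\Big)^2 \ge \Big(\frac{m-l}{m}\Big)^2 > \Big(\frac{m-l-1}{m}\Big)^2 .
\]
The last quantity is positive because $l+1<m$.

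With this comparison the argument is a chain of inequalities. Multiplying the hypothesis $\sum_{i=0}^{l}\binom{m}{i}^2 > N\binom{m}{l+1}^2$ by $c\coloneqq\big(\frac{m-l-1}{m}\big)^2>0$ gives $c\sum_{i=0}^l\binom{m}{i}^2 > N\binom{m-1}{l+1}^2$. On the other side, termwise,
\[
\sum_{i=0}^{l}\binom{m-1}{i}^2=\sum_{i=0}^{l}\Big(\frac{m-i}{m}\Big)^2\binom{m}{i}^2\ \ge\ c\sum_{i=0}^{l}\binom{m}{i}^2 .
\]
Combining the two yields $\sum_{i=0}^{l}\binom{m-1}{i}^2 > N\binom{m-1}{l+1}^2$, which is \eqref{ImplicationinLemma3.3}.

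There are two edge cases. If $N=0$, the conclusion is trivially true, since the $i=0$ term already makes the sum positive. The hypothesis $l+1<m$ guarantees that $\binom{m-1}{l+1}$ is given by the ratio formula and that $c>0$, so the multiplication preserves strict inequality.

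I expect no real obstacle here. The only point needing care is choosing the weakest factor, the one at $i=l+1$, so that a single uniform constant $c$ bounds all the weights $(m-i)^2/m^2$ from below. The monotonicity of $(m-i)/m$ in $i$ gives this immediately.
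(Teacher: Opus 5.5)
Your proof is correct and is the paper's argument: write $\binom{m-1}{i}^2=\big(\frac{m-i}{m}\big)^2\binom{m}{i}^2$, bound every weight with $i\le l$ below by $c=\big(\frac{m-l-1}{m}\big)^2>0$, multiply the hypothesis by $c$, and use $\binom{m-1}{l+1}=\frac{m-l-1}{m}\binom{m}{l+1}$. The argument never uses that $N$ is an integer, which matters because the paper later applies the lemma with $N=\frac{q+1}{3q+1}$; your separate $N=0$ case is also unnecessary, since the general chain already covers it.
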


\begin{proof}
\begin{equation*}
\begin{aligned}
    \sum_{i=0}^{l}\binom{m-1}{i}^2 = \sum_{i=0}^{l}\bigg(\frac{m-i}{m}\bigg)^2\cdot\binom{m}{i}^2 &> \sum_{i=0}^{l}\bigg(\frac{m-l-1}{m}\bigg)^2\cdot\binom{m}{i}^2\\ &= \bigg(\frac{m-l-1}{m}\bigg)^2\cdot\sum_{i=0}^{l}\binom{m}{i}^2\\
    &>\bigg(\frac{m-l-1}{m}\bigg)^2\cdot N\binom{m}{l+1}^2\\
    &=N\binom{m-1}{l+1}^2.
\end{aligned}
\end{equation*}
\end{proof}
We now explain why we only have to show the inequality on the right in \eqref{Equivalence2:Prop3.1} for positive integers $m$ that are multiples of $3$. Since $r_{m}=\big\lfloor\frac{m+2}{3}\big\rfloor$, for any positive integer $q$, the value of $r_{m}$ is equal to $q$ for $m=3q, 3q-1,$ and $3q-2$. If we set $m=3q$, $l=q$, and $N=\frac{q+1}{3q+1}$ in \eqref{ImplicationinLemma3.3}, then condition $l+1<m$ holds for all positive integers $q$, and we get

\begin{equation*}
        \sum_{i=0}^{q}\binom{3q}{i}^2 > \frac{q+1}{3q+1}\binom{3q}{q+1}^2 \implies \sum_{i=0}^{q}\binom{3q-1}{i}^2 > \frac{q+1}{3q+1}\binom{3q-1}{q+1}^2,
    \end{equation*}
which means that if the inequality on the right in \eqref{Equivalence2:Prop3.1} is true for $m=3q$, then it is also true for $m=3q-1$.  Similarly, if we set $m=3q-1$, $l=q$, and $N=\frac{q+1}{3q+1}$ in \eqref{ImplicationinLemma3.3}, then condition $l+1<m$ holds for all positive integers $q\geq2$, and we have

\begin{equation}
        \sum_{i=0}^{l}\binom{3q-1}{i}^2 > \frac{q+1}{3q+1}\binom{3q-1}{q+1}^2 \implies \sum_{i=0}^{l}\binom{3q-2}{i}^2 > \frac{q+1}{3q+1}\binom{3q-2}{q+1}^2,
    \end{equation}
which implies that if the inequality on the right side in \eqref{Equivalence2:Prop3.1} is true for $m=3q-1$, then it is also true for $m=3q-2$. Thus, our goal is to prove
\begin{equation}\label{Finalgoal}
    (3q+1)\sum_{i=0}^{q}\binom{3q}{i}^{2} > (q+1)\binom{3q}{q+1}^{2}
\end{equation}
for positive integers $q$. To prove this inequality, we define two families of polynomials $X_{i}(t)$ and $Y_{i}(t)$ for nonnegative integers $i$ as follows:
\begin{itemize}
    \item $X_{0}(t)\coloneqq t+1$ and $Y_{0}(t)\coloneqq 1$.
    \item $Y_{n+1}(t)\coloneqq (t+1-n)^2Y_{n}(t)=\prod_{i=1}^{n+1}(t+2-i)^2$ for $n \in \mathbb{Z}_{\geq 0}$.
    \item $X_{n+1}(t)\coloneqq (2t+n)^2X_{n}(t)-(3t+1)Y_{n+1}(t)$ for $n \in \mathbb{Z}_{\geq 0}$.
\end{itemize}
The polynomials $X_{n}(t)$ and $Y_{n}(t)$ for $n=0,\ldots,5$ are given in Table \ref{table1}. These polynomials are related to \eqref{Finalgoal} via the following lemma\footnote{Throughout this paper, the empty sum is understood as $0$. For example, when $k=q+1$, $\sum_{i=0}^{q-k}\binom{3q}{i}^{2}=\sum_{i=0}^{-1}\binom{3q}{i}^{2}=0$.}.

\begin{table}
\begin{tabular}
{ | m{0.3cm} | m{9.5cm}| m{5.5cm} | } 
  \hline
  $n$ & $X_{n}(t)$ & $Y_{n}(t)$ \\ 
  \hline
  $0$ & $t+1$ & $1$ \\ 
  \hline
  $1$ & $t^{3}-3t^{2}-5t-1$ & $t^{2}+2t+1$ \\
  \hline
  $2$ & $t^{5}-15t^{4}-36t^{3}-28t^{2}-9t-1$ & $t^{4}+2t^{3}+t^{2}$ \\ 
  \hline
  $3$ & $t^{7}-53t^{6}-254t^{5}-458t^{4}-407t^{3}-189t^{2}-44t-4$ & $t^{6}-2t^{4}+t^{2}$\\ 
  \hline
  $4$ & $t^{9}-189t^{8}-1645t^{7}-5383t^{6}-9397t^{5}-9743t^{4}-6115t^{3}-2249t^{2}-444t-36$ & $t^{8}-4t^{7}+2t^{6}+8t^{5}-7t^{4}-4t^{3}+4t^{2}$ \\ 
  \hline
  $5$ & $t^{11}-711t^{10}-9683t^{9}-50791t^{8}-149885t^{7}-275745t^{6}-330705t^{5}-262509t^{4}-135648t^{3}-43268t^{2}-7680t-576$ & $t^{10}-10t^{9}+35t^{8}-40t^{7}-37t^{6}+110t^{5}-35t^{4}-60t^{3}+36t^{2}$ \\ 
  \hline
\end{tabular}
    \caption{The polynomials $X_{n}(t)$ and $Y_{n}(t)$ for $n=0,\ldots,5$.}
    \label{table1}
\end{table}

\begin{lemma}\label{Lemma3.4}
    For $q\in\mathbb{Z^{+}}$ and $k\in\mathbb{Z}_{\geq0}$ such that $0\leq k\leq q+1$,
\begin{equation}\label{Equivalence:Lemma3.4}
    (3q+1)\sum_{i=0}^{q}\binom{3q}{i}^{2} > (q+1)\binom{3q}{q+1}^{2} \iff (3q+1)\sum_{i=0}^{q-k}\binom{3q}{i}^{2} > \frac{X_{k}(q)}{Y_{k}(q)}\binom{3q}{q+1-k}^{2}.
\end{equation}    
\end{lemma}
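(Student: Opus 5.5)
Induction on $k$ is the natural route: show that the right-hand inequality at level $k$ is equivalent to the one at level $k+1$, for $0\le k\le q$. The base case $k=0$ is immediate. Since $X_0(q)=q+1$, $Y_0(q)=1$ and $\binom{3q}{q+1-0}=\binom{3q}{q+1}$, the two sides of \eqref{Equivalence:Lemma3.4} coincide, so the equivalence is trivial. Chaining the equivalences for $k=0,1,\dots,q$ then gives the lemma for every $0\le k\le q+1$.

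For the inductive step, fix $k$ with $0\le k\le q$. First I would check that $Y_k(q)>0$ in this range. Since $Y_k(q)=\prod_{i=1}^{k}(q+2-i)^2$, each factor $q+2-i\ge q+2-k\ge 2$, so the product is positive. (Only $Y_{q+1}(q)$ involves the factor $(q+2-(q+1))^2=1$, which is still positive.) So the level-$k$ inequality is equivalent to
\[
(3q+1)Y_k(q)\sum_{i=0}^{q-k}\binom{3q}{i}^2 > X_k(q)\binom{3q}{q+1-k}^2 .
\]
Next I would split off the top term $i=q-k$ of the sum and move it to the right:
\[
(3q+1)Y_k(q)\sum_{i=0}^{q-k-1}\binom{3q}{i}^2 > \Big(X_k(q)\binom{3q}{q+1-k}^2-(3q+1)Y_k(q)\binom{3q}{q-k}^2\Big).
\]
The remaining step is to express $\binom{3q}{q+1-k}$ in terms of $\binom{3q}{q-k}$. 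We have
\[
\binom{3q}{q+1-k}=\frac{2q+k}{q+1-k}\binom{3q}{q-k},
\]
since $3q-(q-k)=2q+k$. Substituting and multiplying both sides by $(q+1-k)^2>0$, the right side becomes
\[
\big[(2q+k)^2X_k(q)-(3q+1)(q+1-k)^2Y_k(q)\big]\binom{3q}{q-k}^2,
\]
and the left side becomes $(3q+1)(q+1-k)^2Y_k(q)\sum_{i=0}^{q-k-1}\binom{3q}{i}^2$.

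By the recurrences with $n=k$, $(q+1-k)^2Y_k(q)=Y_{k+1}(q)$ and the bracket equals $X_{k+1}(q)$. Dividing by $Y_{k+1}(q)>0$ yields exactly the level-$(k+1)$ inequality,
\[
(3q+1)\sum_{i=0}^{q-(k+1)}\binom{3q}{i}^2 > \frac{X_{k+1}(q)}{Y_{k+1}(q)}\binom{3q}{q+1-(k+1)}^2 .
\]
Every step is a reversible algebraic manipulation (multiplication by positive quantities, or moving terms across the inequality), so each is an equivalence.

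There is no real obstacle beyond bookkeeping. The points to watch are the positivity of $Y_k(q)$ and of $(q+1-k)^2$ in the allowed range, so that no division by zero or sign flip occurs, and the matching of indices in the recurrence, namely $(t+1-n)$ and $(2t+n)$ at $n=k$. At the last step, $k=q$, the remaining sum is empty and equal to $0$, consistent with the footnote's convention.
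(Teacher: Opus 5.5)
Your proof is correct and follows the paper's own argument: induction on $k$, subtracting the top term $(3q+1)\binom{3q}{q-k}^2$, rewriting $\binom{3q}{q+1-k}=\frac{2q+k}{q+1-k}\binom{3q}{q-k}$, and recognizing the recurrences that produce $X_{k+1}(q)$ and $Y_{k+1}(q)$. The only difference is cosmetic. You clear denominators, which is why you need $Y_k(q)>0$, whereas the paper just subtracts the same quantity from both sides and simplifies the right-hand fraction; that only requires $Y_k(q)\neq 0$ and makes each step obviously reversible.
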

\begin{proof}
    We prove the lemma using induction on $k$. When $k=0$, it is trivially true since $\frac{X_{0}(q)}{Y_{0}(q)}=\frac{q+1}{1}=q+1$. Suppose \eqref{Equivalence:Lemma3.4} holds for some nonnegative integer $k$ such that $0\leq k<q+1$. If we subtract $(3q+1)\binom{3q}{q-k}^{2}$ from both sides of the inequality on the right side of \eqref{Equivalence:Lemma3.4}, then by the recursive definitions of two families of the polynomials $X_{n}$ and $Y_{n}$,
    \begin{equation*}
    \begin{aligned}
        (3q+1)\sum_{i=0}^{q-(k+1)}\binom{3q}{i}^{2} &> \frac{X_{k}(q)}{Y_{k}(q)}\binom{3q}{q+1-k}^{2}-(3q+1)\binom{3q}{q-k}^{2}\\
        &= \frac{X_{k}(q)}{Y_{k}(q)}\bigg(\frac{2q+k}{q+1-k}\bigg)^{2}\binom{3q}{q-k}^{2}-(3q+1)\binom{3q}{q-k}^{2}\\
        &= \frac{(2q+k)^{2}X_{k}(q)-(3q+1)(q+1-k)^{2}Y_{k}(q)}{(q+1-k)^{2}Y_{k}(q)}\binom{3q}{q-k}^{2}\\
        &= \frac{X_{k+1}(q)}{Y_{k+1}(q)}\binom{3q}{q+1-(k+1)}^{2}.
    \end{aligned}
    \end{equation*}
    Thus, the claim holds for $k+1$, which completes the proof.
\end{proof}
Note that if we set $k=q+1$ on the right side of \eqref{Equivalence:Lemma3.4}, we get the following inequality:
\begin{equation*}
    0>\frac{X_{q+1}(q)}{Y_{q+1}(q)}.
\end{equation*}
Thus, if we show $X_{q+1}(q)<0$ and $Y_{q+1}(q)>0$, then \eqref{Finalgoal} is verified (by Lemma \ref{Lemma3.4}) and \eqref{Inequality2:Prop3.1} follows (by Lemma \ref{Lemma3.3}). Since the latter inequality $Y_{q+1}(q)=\prod_{i=1}^{q+1}(q+2-i)^2>0$ is trivial, our task is to show that $X_{q+1}(q)<0$. This requires an in-depth analysis of the coefficients of the polynomials $X_{n}(t)$ and $Y_{n}(t)$. By their definitions, it is not hard to check that $X_{n}(t)$ and $Y_{n}(t)$ are monic polynomials of degrees $(2n+1)$ and $2n$, respectively. We then write two polynomials as follows\footnote{$b_{n,2n}$ is always $1$, so it seems meaningless to denote $1$ by $b_{n,2n}$. However, since using this notation can simplify some recurrence relations that we will describe later, we sometimes use $b_{n,2n}$ to denote the leading coefficient of $Y_{n}(t)$.}:
\begin{equation*}
X_{n}(t)=t^{2n+1}-\sum_{i=0}^{2n}a_{n,i}t^{i} \text{ 
and } Y_{n}(t)=t^{2n}+\sum_{i=0}^{2n-1}b_{n,i}t^{i}=\sum_{i=0}^{2n}b_{n,i}t^{i}.
\end{equation*}

Using the recursive definitions of the polynomials $X_{n}(t)$ and $Y_{n}(t)$, we can find recurrence relations that the coefficients $a_{n,i}$ and $b_{n,i}$ satisfy. For example, from $Y_{n+1}(t)\coloneqq (t+1-n)^2Y_{n}(t)$ for $n \in \mathbb{Z}_{\geq 0}$, we have
\begin{equation*}
    t^{2n+2}+\sum_{i=0}^{2n+1}b_{n+1,i}t^{i}=(t+1-n)^{2}\Bigg(t^{2n}+\sum_{i=0}^{2n-1}b_{n,i}t^{i}\Bigg).
\end{equation*}
If we compare the coefficients of both sides of the equation above, we have\footnote{Recall that $b_{n,2n}=1$. Also, if we set $b_{n,-2}=b_{n,-1}\coloneqq0$, then \eqref{bnjrecurrence3} and \eqref{bnjrecurrence4} can be regarded as \eqref{bnjrecurrence2} with $j=1$ and $j=0$, respectively.}
\begin{equation}\label{bnjrecurrence1}
    b_{n+1,2n+1}=b_{n,2n-1}-(2n-2),
\end{equation}
\begin{equation}\label{bnjrecurrence2}
    b_{n+1,j}=b_{n,j-2}-(2n-2)b_{n,j-1}+(n-1)^2b_{n,j} \text{ for } 2\leq j\leq 2n,
\end{equation}
\begin{equation}\label{bnjrecurrence3}
    b_{n+1,1}=-(2n-2)b_{n,0}+(n-1)^{2}b_{n,1},
\end{equation}
and
\begin{equation}\label{bnjrecurrence4}
    b_{n+1,0}=(n-1)^{2}b_{n,0}.
\end{equation}
Using these recurrence relations, one can find values of $b_{n,j}$ for $j=0,1,2,2n-3,2n-2,$ and $2n-1$.
\begin{lemma}\label{Lemma3.5}
    For an integer $n\geq1$, we have
    \begin{equation*}
        b_{n,2n-1}=3n-n^{2} \text{ and } b_{n,2n-2}=\frac{n}{6}(3n^{3}-20n^{2}+36n-13).
    \end{equation*}
Also, for $n\geq2$, we have $b_{n,2}=[(n-2)!]^2$, $b_{n,1}=b_{n,0}=0$, and
    \begin{equation*}
        b_{n,2n-3}=-\frac{1}{6}n(n-1)(n-2)(n-3)(n^{2}-5n+2).
    \end{equation*}
\end{lemma}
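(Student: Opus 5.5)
We will work throughout with the explicit factorization
\[
Y_n(t)=\prod_{i=1}^{n}(t+2-i)^2=(t+1)^2t^2(t-1)^2\cdots(t-n+2)^2,
\]
and read the coefficients off it, using the recurrences \eqref{bnjrecurrence1}--\eqref{bnjrecurrence4} as an inductive check.

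\textbf{Low-order coefficients.} For $n\geq2$ the product contains the factor $t^2$ (the term $i=2$). Hence $b_{n,0}=b_{n,1}=0$. The coefficient $b_{n,2}$ equals $t^{-2}Y_n(t)$ evaluated at $t=0$, which is
\[
\prod_{i\neq 2}(2-i)^2=1\cdot\prod_{j=1}^{n-2}j^2=[(n-2)!]^2 .
\]
Alternatively, we could induct using \eqref{bnjrecurrence4}, \eqref{bnjrecurrence3}, and \eqref{bnjrecurrence2} with $j=2$. These give $b_{n+1,0}=(n-1)^2b_{n,0}$, $b_{n+1,1}=0$, and $b_{n+1,2}=(n-1)^2b_{n,2}$, starting from $Y_2=t^4+2t^3+t^2$.

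\textbf{Top coefficients via power sums.} Write $Y_n(t)=\prod_{i=1}^n(t-c_i)^2$ with roots $c_i=i-2$, $i=1,\dots,n$, each of multiplicity two. Let $e_k$ be the elementary symmetric functions of the multiset of $2n$ roots. Then $b_{n,2n-k}=(-1)^ke_k$. Using Newton's identities, I would express $e_1,e_2,e_3$ through the power sums
\[
P_k=2\sum_{i=1}^{n}(i-2)^k .
\]
These are explicit polynomials in $n$, since $\sum_{j=-1}^{n-2}j^k$ has a closed form.

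For example, $P_1=2\big(\tfrac{n(n+1)}2-2n\big)=n^2-3n$, so $b_{n,2n-1}=-e_1=3n-n^2$. For the next coefficient, $e_2=(P_1^2-P_2)/2$ with
\[
P_2=2\sum_{j=-1}^{n-2}j^2=2\Big(1+\tfrac{(n-2)(n-1)(2n-3)}6\Big).
\]
Simplifying should give $\frac n6(3n^3-20n^2+36n-13)$. Similarly, $e_3=(P_1^3-3P_1P_2+2P_3)/6$ should factor as $\frac16 n(n-1)(n-2)(n-3)(n^2-5n+2)$, with a sign that gives $b_{n,2n-3}=-e_3$.

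An alternative is to prove each formula by induction on $n$ via \eqref{bnjrecurrence1} and \eqref{bnjrecurrence2}. For $j=2n$ and $j=2n-1$, the recurrence gives
\[
b_{n+1,2n}=b_{n,2n-2}-(2n-2)b_{n,2n-1}+(n-1)^2
\]
and an analogous expression for $b_{n+1,2n-1}$. Each step is then a polynomial identity in $n$ to verify. The base cases come from Table~\ref{table1}: $n=1$ for the first two formulas and $n=2$ for the rest.

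\textbf{Main obstacle.} The only real work is the algebraic simplification for $b_{n,2n-3}$, a degree-six polynomial identity. I would handle it by checking that both sides are polynomials of degree at most six in $n$ and agree at seven values, for instance $n=2,\dots,5$ from Table~\ref{table1} plus a few further values computed directly from the product. The roots $n=0,1,2,3$ are visible directly: for $n\le3$ the roots of $Y_n$ are among $\{-1,0,1\}$ with total multiplicity at most six, and $e_3$ vanishes by the symmetry $\pm1$ and the zero roots. This makes the verification quick.
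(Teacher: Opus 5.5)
Your proof is correct, but your main route differs from the paper's. The paper never uses the roots of $Y_n$. It derives all six formulas by solving, in cascade, the first-order coefficient recurrences \eqref{bnjrecurrence1}--\eqref{bnjrecurrence4}: first $b_{n,0}$, then $b_{n,1}$ and $b_{n,2}$, then $b_{n,2n-1}$, and finally $b_{n,2n-2}$ and $b_{n,2n-3}$ from \eqref{bnjrecurrence2} with $j=2n$ and $j=2n-1$. That is exactly what you list as your alternative.

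You instead read everything off the factorization $Y_n(t)=\prod_{i=1}^{n}(t+2-i)^2$. The factor $t^2$ gives $b_{n,0}=b_{n,1}=0$ and $b_{n,2}=[(n-2)!]^2$ immediately. Vieta and Newton's identities with $P_1=n^2-3n$, $P_2=\tfrac13(2n^3-9n^2+13n)$ and $P_3=-2+\tfrac12(n-1)^2(n-2)^2$ give $e_1$, $e_2$, $e_3$. Indeed $6e_3=n^6-11n^5+43n^4-73n^3+52n^2-12n=n(n-1)(n-2)(n-3)(n^2-5n+2)$. So your interpolation shortcut is sound: degree at most $6$, vanishing at $n=0,1,2,3$, matching Table~\ref{table1} at $n=4,5$, plus one further value.

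Your route is more direct and conceptual. It needs no base cases or telescoping sums, and it shows at once that $b_{n,2n-k}$ is a polynomial in $n$ of degree at most $2k$ for each fixed $k$.

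The paper's route buys uniformity with the rest of Section 2. The same recurrence-solving technique produces $a_{n,2n}$, $a_{n,2n-1}$, $a_{n,1}$ and $a_{n,0}$ in Lemma~\ref{Lemma3.6}, where $X_n$ has no usable factorization. The recurrence \eqref{bnjrecurrence2} is also reused in the induction of Lemma~\ref{Lemma3.7}.
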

\begin{proof}
    First, $b_{n,0}=0$ for $n\geq2$ can be obtained by solving \eqref{bnjrecurrence4} with the initial condition $b_{2,0}=0$.
    
    Combining $b_{n,0}=0$ for $n\geq2$ and \eqref{bnjrecurrence3}, we get a new recurrence
    \begin{equation*}
        b_{n+1,1}=(n-1)^{2}b_{n,1}
    \end{equation*}
    for $n\geq2$. Using this recurrence and the initial condition $b_{2,1}=0$, one gets $b_{n,1}=0$ for $n\geq2$.

    Similarly, combining $b_{n,1}=b_{n,0}=0$ for $n\geq2$ and \eqref{bnjrecurrence2} with $j=2$, we obtain a recurrence
    \begin{equation*}
        b_{n+1,2}=(n-1)^{2}b_{n,2}
    \end{equation*}
    for $n\geq2$. Solving this recurrence with the initial condition $b_{2,2}=1$ gives $b_{n,2}=[(n-2)!]^2$ for $n\geq2$.
    
    Next, $b_{n,2n-1}=3n-n^{2}$ can be obtained by solving \eqref{bnjrecurrence1} with the initial condition $b_{1,1}=2$.
    
    To find $b_{n,2n-2}$, we set $j=2n$ in \eqref{bnjrecurrence2} and use $b_{n,2n-1}=3n-n^{2}$ and $b_{n,2n}=1$. Then we have
    \begin{equation*}
        b_{n+1,2n}=b_{n,2n-2}+2n^{3}-7n^{2}+4n+1
    \end{equation*}
    and can get $b_{n,2n-2}=\frac{n}{6}(3n^{3}-20n^{2}+36n-13)$ by solving this recurrence relation with the initial condition $b_{1,0}=1.$
    
    Lastly, if we set $j=2n-1$ in \eqref{bnjrecurrence2} and use $b_{n,2n-2}=\frac{n}{6}(3n^{3}-20n^{2}+36n-13)$ and $b_{n,2n-1}=3n-n^{2}$, then we have
    \begin{equation*}
        b_{n+1,2n-1}=b_{n,2n-3}-\frac{1}{3}n(n-1)(n-2)(3n^{2}-11n+2).
    \end{equation*}
    Solving the recurrence relation with the initial condition $b_{2,1}=0$ gives the desired formula for $b_{n,2n-3}$ for $n\geq2$.
\end{proof}

Now, we use the recursive definition of $X_{n}(t)$ to obtain the recurrence relations that $a_{n,i}$ satisfies. If we compare the coefficients of the equation $X_{n+1}(t)=(2t+n)^{2}X_{n}(t)-(3t+1)Y_{n+1}(t)$, we have
\begin{equation}\label{anjreucrrence1}
    a_{n+1,2n+2}=(-4n+4a_{n,2n})+(1+3b_{n+1,2n+1}),
\end{equation}
\begin{equation}\label{anjreucrrence2}
    a_{n+1,2n+1}=(-n^{2}+4na_{n,2n}+4a_{n,2n-1})+(b_{n+1,2n+1}+3b_{n+1,2n}),
\end{equation}
\begin{equation}\label{anjreucrrence3}
    a_{n+1,j}=(n^{2}a_{n,j}+4na_{n,j-1}+4a_{n,j-2})+(b_{n+1,j}+3b_{n+1,j-1}) \text{ for } 2\leq j\leq 2n,
\end{equation}
\begin{equation}\label{anjreucrrence4}
    a_{n+1,1}=(n^{2}a_{n,1}+4na_{n,0})+(b_{n+1,1}+3b_{n+1,0}),
\end{equation}
and
\begin{equation}\label{anjreucrrence5}
    a_{n+1,0}=n^{2}a_{n,0}+b_{n+1,0}.
\end{equation}
Using these recurrence relations and Lemma \ref{Lemma3.5}, one can find the explicit values of $a_{n,2n}$, $a_{n,2n-1}$, $a_{n,1},$ and $a_{n,0}$. Furthermore, one can also find a lower bound of $a_{n,j}$ for integers $n$ and $j$ such that $n\geq2$ and $0\leq j\leq 2n$. These will be presented in Lemmas \ref{Lemma3.6} and \ref{Lemma3.7}, respectively.
\begin{lemma}\label{Lemma3.6}
    For an integer $n\geq1$, we have
    \begin{equation*}
    \begin{aligned}
        &a_{n,2n}=n^{2}+n+\frac{1}{3}(2^{2n+1}-5), a_{n,2n-1}=\frac{(3n^{2}-3n+29)4^{n}}{9}-\frac{9n^{4}+12n^{3}+24n^{2}+39n+58}{18},\\
        &a_{n,1}=\Bigg(4\sum_{i=1}^{n-1}\frac{1}{i}+5\Bigg)[(n-1)!]^2, \text{ and } a_{n,0}=[(n-1)!]^2.
    \end{aligned}
    \end{equation*}
\end{lemma}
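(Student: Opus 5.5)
Each of the four formulas comes from solving a first-order linear recurrence in $n$. These recurrences are obtained from \eqref{anjreucrrence1}, \eqref{anjreucrrence2}, \eqref{anjreucrrence4} and \eqref{anjreucrrence5} after substituting the $b$-values of Lemma \ref{Lemma3.5}. The initial values come from Table \ref{table1}: $X_1(t)=t^3-3t^2-5t-1$ gives $a_{1,2}=3$, $a_{1,1}=5$, $a_{1,0}=1$. We proceed from the extreme coefficients inward, and each closed form is verified by induction on $n$.

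\textbf{Constant term.} By Lemma \ref{Lemma3.5}, $b_{n+1,0}=0$ for $n\ge1$, so \eqref{anjreucrrence5} becomes $a_{n+1,0}=n^2a_{n,0}$. With $a_{1,0}=1$ this gives $a_{n,0}=[(n-1)!]^2$.

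\textbf{Linear coefficient.} For $n\ge1$ we have $b_{n+1,1}=b_{n+1,0}=0$, so \eqref{anjreucrrence4} reads $a_{n+1,1}=n^2a_{n,1}+4n[(n-1)!]^2$. Set $a_{n,1}=c_n[(n-1)!]^2$. Dividing by $(n!)^2$ gives $c_{n+1}=c_n+4/n$, and $c_1=5$, so $c_n=5+4\sum_{i=1}^{n-1}1/i$.

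\textbf{Coefficient of $t^{2n}$.} Lemma \ref{Lemma3.5} gives $b_{n+1,2n+1}=3(n+1)-(n+1)^2=-n^2+n+2$. Then \eqref{anjreucrrence1} becomes
\[
a_{n+1,2n+2}=4a_{n,2n}-4n+1+3(-n^2+n+2)=4a_{n,2n}-3n^2-n+7.
\]
Check that $f(n)=n^2+n+\frac13(2^{2n+1}-5)$ satisfies it. We have $f(1)=2+1=3$. The exponential parts match because $\frac13 2^{2n+3}=4\cdot\frac13 2^{2n+1}$. For the rest, $(n+1)^2+(n+1)-\frac53$ should equal $4(n^2+n-\frac53)-3n^2-n+7$, i.e. $n^2+3n+\frac13$. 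Both sides equal $n^2+3n+\frac13$, as required.

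\textbf{Coefficient of $t^{2n-1}$.} This is the main computation. Lemma \ref{Lemma3.5} gives $b_{n+1,2n}=\frac{n+1}{6}\bigl(3(n+1)^3-20(n+1)^2+36(n+1)-13\bigr)$. Inserting this, together with $b_{n+1,2n+1}$ and the formula for $a_{n,2n}$ just proved, into \eqref{anjreucrrence2} gives
\[
a_{n+1,2n+1}=4a_{n,2n-1}+4n\Bigl(n^2+n+\tfrac13(2^{2n+1}-5)\Bigr)+P(n),
\]
where $P$ is an explicit quartic. The inhomogeneous term contains $n\,4^n$. This explains the ansatz $(\alpha n^2+\beta n+\gamma)4^n$ plus a quartic polynomial, since $4$ is a resonant root and forces one extra degree of $n$.

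Rather than derive the coefficients, I will verify the stated formula inductively. Compare the coefficients of $4^{n}$ (a quadratic identity in $n$) and the polynomial parts (a quartic identity) separately. Then check the base case $a_{1,1}=\frac{29\cdot4}{9}-\frac{142}{18}=5$, which agrees with the table. As a sanity check, $n=2$ gives $a_{2,3}=\frac{35\cdot16}{9}-\frac{144+96+96+78+58}{18}=\frac{560-236}{9}=36$, matching $X_2$.

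The only obstacle is the bookkeeping in this last identity. I would organize it by separating the $4^n$-part from the polynomial part before expanding.
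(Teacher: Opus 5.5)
Your proposal is correct and follows the paper's proof essentially step for step. Each of the four coefficients satisfies a first-order recurrence obtained from \eqref{anjreucrrence1}, \eqref{anjreucrrence2}, \eqref{anjreucrrence4}, \eqref{anjreucrrence5} and Lemma~\ref{Lemma3.5}, with initial data read off from $X_1$, and the closed forms follow by induction. The $a_{n,2n-1}$ check you defer as bookkeeping does close: the forcing term is $\frac{8n}{3}4^{n}+\frac{3}{2}n^{4}-n^{2}-\frac{n}{6}+5$ (the paper's $\frac{n}{6}(9n^{3}-6n+4^{n+2}-1)+5$), and the $4^{n}$-part and the quartic part of the stated formula each satisfy the recurrence separately.
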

\begin{proof}
First, if we combine \eqref{anjreucrrence5} with $b_{n,0}=0$ for $n\geq2$, then we get a new recurrence
\begin{equation*}
    a_{n+1,0}=n^{2}a_{n,0} \text{ for } n\geq1.
\end{equation*}
Then $a_{n,0}=[(n-1)!]^{2}$ follows from the above recurrence and the initial condition $a_{1,0}=1$.

If we combine \eqref{anjreucrrence4} with $b_{n,0}=b_{n,1}=0$ for $n\geq2$ and $a_{n,0}=[(n-1)!]^{2}$ for $n\geq1$, then we have a new recurrence
\begin{equation*}
    a_{n+1,1}=n^{2}a_{n,1}+4n[(n-1)!]^{2} \text{ for } n\geq1,
\end{equation*}
which is equivalent to
\begin{equation*}
    \frac{a_{n+1,1}}{[n!]^{2}}=\frac{a_{n,1}}{[(n-1)!]^{2}}+\frac{4}{n} \text{ for } n\geq1.
\end{equation*}
If we solve this recurrence relation with the initial condition $a_{1,1}=5$, we get the formula of $a_{n,1}$ given in the statement of the lemma.

Using $b_{n,2n-1}=(3n-n^{2})$ for $n\geq1$ and \eqref{anjreucrrence1}, we get a recurrence relation
\begin{equation*}
    a_{n+1,2n+2}=4a_{n,2n}-3n^{2}-n+7 \text{ for } n\geq1
\end{equation*}
and if we solve this recurrence with the initial condition $a_{1,2}=3$, we get $a_{n,2n}=n^{2}+n+\frac{1}{3}(2^{2n+1}-5)$.

To solve $a_{n,2n-1}$, we put formulas for $a_{n,2n}$, $b_{n+1,2n+1}$, and $b_{n+1,2n}$ in \eqref{anjreucrrence2}, then we get
\begin{equation*}
    a_{n+1,2n+1}=4a_{n,2n-1}+\frac{n}{6}(9n^{3}-6n+4^{n+2}-1)+5 \text{ for } n\geq1.
\end{equation*}
If we solve this recurrence with the initial condition $a_{1,1}=5$, then the solution is precisely the expression for $a_{n,2n-1}$ given in the statement of the lemma.
\end{proof}

As mentioned right before Lemma \ref{Lemma3.6}, we now find a lower bound of $a_{n,j}$ for integers $n$ and $j$ such that $n\geq2$ and $0\leq j\leq2n$. It is stated as Lemma \ref{Lemma3.7} below and is the key to proving \eqref{Inequality2:Prop3.1}.

\begin{lemma}\label{Lemma3.7}
For integers $n$ and $j$ such that $n\geq 2$ and $0\leq j\leq 2n$, we have
    \begin{equation}\label{Inequality1inLemma3.7}
        a_{n,j}\geq|b_{n,j-2}|+(2n)|b_{n,j-1}|+n^{2}|b_{n,j}|,
    \end{equation}
where we set $b_{n,-2}=b_{n,-1}\coloneqq 0$. As a consequence, we have $a_{n,j}\geq0$.
\end{lemma}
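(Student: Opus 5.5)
We argue by induction on $n$, using the recurrence \eqref{anjreucrrence3} together with the $b$-recurrences \eqref{bnjrecurrence1}--\eqref{bnjrecurrence4}. Write $B_{n,j}\coloneqq|b_{n,j-2}|+2n|b_{n,j-1}|+n^{2}|b_{n,j}|$ for the right-hand side of \eqref{Inequality1inLemma3.7}. The base case $n=2$ is read off Table \ref{table1}: $Y_2=t^4+2t^3+t^2$ and $a_{2,j}=1,9,28,36,15$ for $j=0,\dots,4$. The corresponding values $B_{2,j}$ are $0,0,4,12,13$, so the inequality holds. It may be convenient to also check $n=3,4,5$ from the table, to absorb small-$n$ irregularities where the factor $(n-1)^2$ in the $Y$-recurrence is tiny.

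For the inductive step, assume \eqref{Inequality1inLemma3.7} for $n$ and all $j$. For $2\le j\le 2n$, \eqref{anjreucrrence3} gives
\[
a_{n+1,j}\ \ge\ n^{2}a_{n,j}+4n\,a_{n,j-1}+4a_{n,j-2}-|b_{n+1,j}|-3|b_{n+1,j-1}|.
\]
We substitute the induction hypothesis for $a_{n,j},a_{n,j-1},a_{n,j-2}$, which yields a nonnegative combination of the $|b_{n,i}|$ for $j-4\le i\le j$. On the other side, \eqref{bnjrecurrence2} gives
\[
|b_{n+1,i}|\le |b_{n,i-2}|+(2n-2)|b_{n,i-1}|+(n-1)^{2}|b_{n,i}|,
\]
so the target $B_{n+1,j}+|b_{n+1,j}|+3|b_{n+1,j-1}|$ is also bounded by a combination of $|b_{n,i}|$ for $j-4\le i\le j$. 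It then suffices to compare coefficients of each $|b_{n,i}|$. On the left they are roughly $n^2\cdot n^2$, $n^2\cdot 2n+4n\cdot n^2$, and so on. The right-hand coefficients are of the same order, with the $(n+1)$ factors, $(n-1)^2$ factors and extra lower-order terms. I expect a term-by-term inequality of polynomials in $n$ to hold for all $n\ge$ some small $n_0$ (the left has coefficient $n^4$ against roughly $(n+1)^2(n-1)^2+\dots$ on the right for $|b_{n,j}|$, which is tight). Where it is tight, we instead carry the non-absolute surplus, e.g.\ use $a_{n,j}\ge B_{n,j}$ only partially and keep an extra $n^2$-type slack from the previous step. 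If the pure coefficient comparison fails, the fix is to strengthen the induction hypothesis to $a_{n,j}\ge c\,B_{n,j}$ with a constant $c>1$ (say $c=2$), verified at the base from the table.

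The boundary indices are handled separately. For $j=0,1$ we have $b_{n,0}=b_{n,1}=0$ for $n\ge2$, so $B_{n,0}=0$ and $B_{n,1}=0$, and Lemma \ref{Lemma3.6} shows $a_{n,0},a_{n,1}>0$. For $j=2$, $B_{n,2}=n^2[(n-2)!]^2$, and $a_{n,2}$ follows from the recurrence, which is dominated by $4a_{n-1,0}+\dots$; alternatively the general step applies. For $j=2n-1$ and $j=2n$ we use the explicit formulas of Lemmas \ref{Lemma3.5} and \ref{Lemma3.6}: $a_{n,2n}$ grows like $\tfrac{2}{3}4^n$, while $B_{n,2n}=|b_{n,2n-2}|+2n|b_{n,2n-1}|+n^2$ is polynomial in $n$, so exponential versus polynomial gives the bound for $n\ge2$ after a direct check of small $n$. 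The same argument works for $j=2n-1$, using $b_{n,2n-3}$. These edges matter because the generic recurrence \eqref{anjreucrrence3} only covers $2\le j\le 2n$ at level $n+1$, while the top indices $2n+1,2n+2$ come from \eqref{anjreucrrence1} and \eqref{anjreucrrence2}.

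The consequence $a_{n,j}\ge0$ is immediate, since the right-hand side of \eqref{Inequality1inLemma3.7} is nonnegative. The main obstacle is the coefficient comparison in the generic step, since the $|b_{n,j}|$ coefficient is nearly balanced at order $n^4$. I would first attempt it with the strengthened constant $c$, or by bookkeeping the exact $n^2 a_{n,j}$ surplus.
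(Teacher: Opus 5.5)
Your proposal follows essentially the same route as the paper's proof. Both argue by induction on $n$ with the base $n=2$ read off Table~\ref{table1}. Both handle $j=0,1$ with Lemmas~\ref{Lemma3.5} and~\ref{Lemma3.6}, and the top indices $2n+1,2n+2$ with the explicit formulas (exponential versus polynomial, plus small cases). The range $2\le j\le 2n$ goes through \eqref{anjreucrrence3}, the induction hypothesis, and the triangle inequality on \eqref{bnjrecurrence2}. Your bound $b_{n+1,j}+3b_{n+1,j-1}\ge -|b_{n+1,j}|-3|b_{n+1,j-1}|$, followed by the triangle inequality, gives exactly the same estimate as the paper's termwise use of $x\ge -|x|$.

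The coefficient comparison you left as an expectation does close with constant $1$, so no strengthening is needed. The coefficients of $|b_{n,j-4}|,\dots,|b_{n,j}|$ are
\[
4,\quad 12n,\quad 13n^2,\quad 6n^3,\quad n^4
\]
on your left side and
\[
1,\quad 4n+3,\quad 6n^2+6n-7,\quad 4n^3+3n^2-8n+1,\quad n^4-n^2-2n+2
\]
on your right side. The differences $3$, $8n-3$, $7n^2-6n+7$, $2n^3-3n^2+8n-1$, $n^2+2n-2$ are all positive for $n\ge 2$. The $n^4$ terms do cancel, but the slack $n^2+2n-2$ appears at order $n^2$.

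Your fallback hypothesis $a_{n,j}\ge 2B_{n,j}$ would not have worked: it already fails at the base, since $a_{2,4}=15<26=2B_{2,4}$.
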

\begin{proof}
We prove this lemma using induction. For $n=2$, $j\in\{0,\ldots,4\}$ and we can easily check that \eqref{Inequality1inLemma3.7} holds in these cases. Indeed, for $(n,j)=(2,0),\ldots,(2,4)$, we have

\begin{equation*}
    a_{2,0}\geq|b_{2,-2}|+(4)|b_{2,-1}|+(2)^{2}|b_{2,0}| \iff 1\geq |0|+4|0|+(2)^{2}|0|,
\end{equation*}
\begin{equation*}
    a_{2,1}\geq|b_{2,-1}|+(4)|b_{2,0}|+(2)^{2}|b_{2,1}| \iff 9\geq |0|+4|0|+(2)^{2}|0|,
\end{equation*}
\begin{equation*}
    a_{2,2}\geq|b_{2,0}|+(4)|b_{2,1}|+(2)^{2}|b_{2,2}| \iff 28\geq |0|+4|0|+(2)^{2}|1|,
\end{equation*}
\begin{equation*}
    a_{2,3}\geq|b_{2,1}|+(4)|b_{2,2}|+(2)^{2}|b_{2,3}| \iff 36\geq |0|+4|1|+(2)^{2}|2|,
\end{equation*}
and
\begin{equation*}
    a_{2,4}\geq|b_{2,2}|+(4)|b_{2,3}|+(2)^{2}|b_{2,4}| \iff 15\geq |1|+4|2|+(2)^{2}|1|.
\end{equation*}
This confirms the statement for $n=2$. We now assume that the claim holds for integers $n$ and $j$ such that $n\geq2$ and $0\leq j\leq 2n$. Under this assumption, we show that the claim still holds for $n+1$ and $j$ such that $0\leq j\leq2n+2$. More precisely, we need to show
\begin{equation}\label{Inequality2inLemma3.7}
        a_{n+1,j}\geq|b_{n+1,j-2}|+(2n+2)|b_{n+1,j-1}|+(n+1)^{2}|b_{n+1,j}|
\end{equation}
for $j=0,\ldots,2n+2$.

Since $a_{n+1,0},a_{n+1,1}>0$ (by Lemma \ref{Lemma3.6}) and $b_{n+1,-2}=b_{n+1,-1}=b_{n+1,0}=b_{n+1,1}=0$ (by Lemma \ref{Lemma3.5}), \eqref{Inequality2inLemma3.7} is true for $j=0$ and $1$. We now consider the cases when $j=2n+1$ and $2n+2$. When $j=2n+2$, \eqref{Inequality2inLemma3.7} becomes the following inequality.
\begin{equation}\label{Inequality3inLemma3.7}
        a_{n+1,2n+2}\geq|b_{n+1,2n}|+(2n+2)|b_{n+1,2n+1}|+(n+1)^{2}|b_{n+1,2n+2}|.
\end{equation}
Using Table \ref{table1}, one can easily check that the above inequality holds for $n=2$ (When $n=2$, \eqref{Inequality3inLemma3.7} becomes $53\geq11$), so we only have to prove it for $n\geq3$. Using explicit expressions for $a_{n+1,2n+2}$ (in Lemma \ref{Lemma3.6}), $b_{n+1,2n}$ and $b_{n+1,2n+1}$ (in Lemma \ref{Lemma3.5}), and $b_{n+1,2n+2}=1$, we can rewrite this inequality as follows:
\begin{equation*}
\begin{aligned}
    (n+1)^{2}+(n+1)+\frac{1}{3}(2^{2n+3}-5)\geq &\Big|\frac{(n+1)}{6}(3(n+1)^{3}-20(n+1)^{2}+36(n+1)-13)\Big|\\
    &+(2n+2)|3(n+1)-(n+1)^{2}|+(n+1)^{2}.
\end{aligned}
\end{equation*}
One can check $3(n+1)^{3}-20(n+1)^{2}+36(n+1)-13>0$ and $3(n+1)-(n+1)^{2}<0$ for $n\geq3$. Thus, we need to verify
\begin{equation*}
\begin{aligned}
    (n+1)^{2}+(n+1)+\frac{1}{3}(2^{2n+3}-5)\geq &\frac{(n+1)}{6}(3(n+1)^{3}-20(n+1)^{2}+36(n+1)-13)\\
    &+(2n+2)((n+1)^{2}-3(n+1))+(n+1)^{2}
\end{aligned}
\end{equation*}
for $n\geq 3$. If we solve this inequality for $2^{2n+3}$, we gets
\begin{equation*}
    2^{2n+3}\geq\frac{3}{2}n^4+2n^3-3n^2-\frac{31}{2}n-7.
\end{equation*}
A routine derivative argument establishes that this inequality is true for real numbers $n\geq3$. More precisely, we consider a function
\begin{equation*}
    f(x)=2^{2x+3}-\bigg(\frac{3}{2}x^4+2x^3-3x^2-\frac{31}{2}x-7\bigg).
\end{equation*}
Then, it suffices to show that $f(x)>0$ for $x\geq3$. One can check that $f(3), f'(3),\ldots,f^{(4)}(3)>0$ and $f^{(5)}(x)=(2\ln2)^5\cdot2^{2x+3}>0$ for all $x\geq 3$, and thus we conclude that $f(x), f'(x),\ldots,f^{(4)}(x)>0$ for all $x\geq3$. This verifies that \eqref{Inequality3inLemma3.7} is true for $n\geq3$, and thus we can conclude that \eqref{Inequality3inLemma3.7} is true for $n\geq2$.

We can use almost the same approach to verify the case $j=2n+1$. In this case, \eqref{Inequality2inLemma3.7} becomes the following inequality.
\begin{equation}\label{Inequality4inLemma3.7}
        a_{n+1,2n+1}\geq|b_{n+1,2n-1}|+(2n+2)|b_{n+1,2n}|+(n+1)^{2}|b_{n+1,2n+1}|.
\end{equation}
The cases when $n=2$ and $3$ can be easily checked using Table \ref{table1} (when $n=2$ and $3$, \eqref{Inequality4inLemma3.7} becomes $254\geq12$ and $1645\geq88$, respectively). Again, using explicit expressions for $a_{n+1,2n+1}$ (in Lemma \ref{Lemma3.6}), $b_{n+1,2n-1}$, $b_{n+1,2n}$, and $b_{n+1,2n+1}$ (in Lemma \ref{Lemma3.5}), we can rewrite this inequality as follows:
\begin{equation*}
\begin{aligned}
    &\frac{(3(n+1)^{2}-3(n+1)+29)4^{n+1}}{9}-\frac{9(n+1)^{4}+12(n+1)^{3}+24(n+1)^{2}+39(n+1)+58}{18}\\
    \geq &\Big|-\frac{1}{6}(n+1)n(n-1)(n-2)((n+1)^{2}-5(n+1)+2)\Big|\\
    &+(2n+2)\Big|\frac{(n+1)}{6}(3(n+1)^{3}-20(n+1)^{2}+36(n+1)-13)\Big|+(n+1)^{2}|3(n+1)-(n+1)^{2}|.
\end{aligned}
\end{equation*}
One can check $(n+1)^{2}-5(n+1)+2>0$, $3(n+1)^{3}-20(n+1)^{2}+36(n+1)-13>0$, and $3(n+1)-(n+1)^{2}<0$ for $n\geq4$. Thus, we need to verify 
\begin{equation*}
\begin{aligned}
    &\frac{(3(n+1)^{2}-3(n+1)+29)4^{n+1}}{9}-\frac{9(n+1)^{4}+12(n+1)^{3}+24(n+1)^{2}+39(n+1)+58}{18}\\
    \geq &\frac{1}{6}(n+1)n(n-1)(n-2)((n+1)^{2}-5(n+1)+2)\\
    &+\frac{1}{3}(n+1)^2(3(n+1)^{3}-20(n+1)^{2}+36(n+1)-13)+(n+1)^{2}((n+1)^{2}-3(n+1))
\end{aligned}
\end{equation*}
for $n\geq 4$. Again, using a routine derivative argument, one can verify that this inequality is valid for real numbers $n\geq4$. If one solves this inequality for $4^{n+1}$, it becomes
\begin{equation*}
\begin{aligned}
    4^{n+1}
    \geq &\frac{3n^6+3n^5+6n^4+9n^3+78n^2+159n+142}{2(3n^2+3n+29)}\\
    =&\bigg(\frac{1}{2}n^4-\frac{23}{6}n^2+\frac{16}{3}n+\frac{805}{18}\bigg)-\frac{3768n+22067}{18(3n^2+3n+29)}.
\end{aligned}
\end{equation*}
Since $\frac{3768n+22067}{18(3n^2+3n+29)}>0$ for $n\geq4$, it suffices to show that 
\begin{equation*}
    4^{n+1}
    \geq \frac{1}{2}n^4-\frac{23}{6}n^2+\frac{16}{3}n+\frac{805}{18}
\end{equation*}
for $n\geq4$. This time, we consider a function
\begin{equation*}
    g(x)=4^{x+1}-\bigg(\frac{1}{2}x^4-\frac{23}{6}x^2+\frac{16}{3}x+\frac{805}{18}\bigg).
\end{equation*}
One can then check that $g(4), g'(4),\ldots,g^{(4)}(4)>0$ and $g^{(5)}(x)=(\ln4)^5\cdot4^{x+1}>0$ for all $x\geq 4$, and thus we conclude that $g(x), g'(x),\ldots,g^{(4)}(x)>0$ for all $x\geq4$.
This confirms \eqref{Inequality4inLemma3.7} for $n\geq4$ and thus for $n\geq 2$.

To complete the proof, we still need to show \eqref{Inequality2inLemma3.7} for integers $j$ between $2$ and $2n$. Note that for any integers $j$ such that $2\leq j\leq2n$, we have\footnote{First one is \eqref{anjreucrrence3}, second, third, and fourth ones are from the induction hypothesis, and the last three follow from \eqref{bnjrecurrence2}, \eqref{bnjrecurrence3}, and \eqref{bnjrecurrence4}. Note that with the conventions $b_{n,-2}\coloneqq0$ and $b_{n,-1}\coloneqq0$, one can view \eqref{bnjrecurrence3} and \eqref{bnjrecurrence4} as \eqref{bnjrecurrence2} with $j=1$ and $j=0$, respectively.}
\begin{itemize}
    \item $a_{n+1,j}=(n^{2}a_{n,j}+4na_{n,j-1}+4a_{n,j-2})+(b_{n+1,j}+3b_{n+1,j-1})$,
    \item $a_{n,j}\geq|b_{n,j-2}|+(2n)|b_{n,j-1}|+n^{2}|b_{n,j}|$,
    \item $a_{n,j-1}\geq|b_{n,j-3}|+(2n)|b_{n,j-2}|+n^{2}|b_{n,j-1}|$,
    \item $a_{n,j-2}\geq|b_{n,j-4}|+(2n)|b_{n,j-3}|+n^{2}|b_{n,j-2}|$,
    \item $b_{n+1,j}=b_{n,j-2}-(2n-2)b_{n,j-1}+(n-1)^2b_{n,j}$,
    \item $b_{n+1,j-1}=b_{n,j-3}-(2n-2)b_{n,j-2}+(n-1)^2b_{n,j-1}$, and
    \item $b_{n+1,j-2}=b_{n,j-4}-(2n-2)b_{n,j-3}+(n-1)^2b_{n,j-2}$.
\end{itemize}
If we combine these equations and inequalities, we get
\begin{align*}
    a_{n+1,j}
    & = (n^{2}a_{n,j}+4na_{n,j-1}+4a_{n,j-2})+(b_{n+1,j}+3b_{n+1,j-1})\\
    &\geq  n^{2}(|b_{n,j-2}|+(2n)|b_{n,j-1}|+n^{2}|b_{n,j}|)+4n(|b_{n,j-3}|+(2n)|b_{n,j-2}|+n^{2}|b_{n,j-1}|)\\
    &\quad+4(|b_{n,j-4}|+(2n)|b_{n,j-3}|+n^{2}|b_{n,j-2}|)+(b_{n,j-2}-(2n-2)b_{n,j-1}+(n-1)^2b_{n,j})\\
    &\quad+3(b_{n,j-3}-(2n-2)b_{n,j-2}+(n-1)^2b_{n,j-1})\\
    &\geq 4|b_{n,j-4}|+(4n+8n-3)|b_{n,j-3}|+(n^{2}+8n^{2}+4n^{2}-1-3(2n-2))|b_{n,j-2}|\\
    &\quad+(2n^{3}+4n^{3}-(2n-2)-3(n-1)^{2}))|b_{n,j-1}|+(n^{4}-(n-1)^{2})|b_{n,j}|\\
    &\geq |b_{n,j-4}|+(4n)|b_{n,j-3}|+(6n^{2}-2)|b_{n,j-2}|+(4n^{3}-4n)|b_{n,j-1}|+(n^{4}-2n^{2}+1)|b_{n,j}|\\
    & = |b_{n,j-4}|+\{(2n-2)+(2n+2)\}|b_{n,j-3}|+\{(n-1)^{2}+(2n+2)(2n-2)+(n+1)^{2}\}|b_{n,j-2}|\\
    &\quad+\{(2n+2)(n-1)^{2}+(2n-2)(n+1)^{2}\}|b_{n,j-1}|+\{(n+1)^{2}(n-1)^{2}\}|b_{n,j}|\\ 
    & = \Big[|b_{n,j-4}|+(2n-2)|b_{n,j-3}|+(n-1)^{2}|b_{n,j-2}|\Big]+(2n+2)\Big[|b_{n,j-3}|+(2n-2)|b_{n,j-2}|\\
    &\quad+(n-1)^{2}|b_{n,j-1}|\Big]+(n+1)^{2}\Big[|b_{n,j-2}|+(2n-2)|b_{n,j-1}|+(n-1)^{2}|b_{n,j}|\Big]\\
    &\geq |b_{n,j-4}-(2n-2)b_{n,j-3}+(n-1)^2b_{n,j-2}|+(2n+2)|b_{n,j-3}-(2n-2)b_{n,j-2}\\
    &\quad+(n-1)^2b_{n,j-1}|+(n+1)^{2}|b_{n,j-2}-(2n-2)b_{n,j-1}+(n-1)^2b_{n,j}|\\
    & =|b_{n+1,j-2}|+(2n+2)|b_{n+1,j-1}|+(n+1)^{2}|b_{n+1,j}|
\end{align*}
for any integers $2\leq j\leq 2n$, where
\begin{itemize}
    \item in the first inequality, we used the induction hypothesis and the recurrences for $b_{n+1,j-1}$ and $b_{n+1,j-2}$,
    \item in the second inequality, we used $x\geq-|x|$ for $x\in\mathbb{R}$,
    \item in the third inequality, we used several inequalities $4\geq1$, $4n+8n-3=12n-3\geq4n$, $n^2+8n^2+4n^2-1-3(2n-2)=13n^2-6n+5\geq 6n^2-2$, $2n^3+4n^3-(2n-2)-3(n-1)^2=6n^3-3n^2+4n-1\geq 4n^3-4n$, and $n^4-(n-1)^2=n^4-n^2+2n-1\geq n^4-2n^2+1$, which hold for $n\geq2$,
    \item in the fourth inequality, we used a triangular inequality ($|a|+|b|+|c|\geq|a-b+c|$).
\end{itemize}
Thus, we conclude that \eqref{Inequality1inLemma3.7} is true for $n+1$ and all $j$ between $0$ and $2n+2$. This completes the proof of the induction step and we conclude that \eqref{Inequality1inLemma3.7} holds for all integers $n$ and $j$ such that $n\geq 2$ and $0\leq j\leq 2n$. This completes the proof.
\end{proof}

We now prove the second part of Theorem \ref{MainThm}.
\begin{proof}[Proof of Theorem \ref{MainThm} (b)]
If we set $j=r_{m}$ in \eqref{Inequality1:Lemma3.2}, then we get the right side of \eqref{Equivalence1:Prop3.1} and it verifies $g_{m}(r_{m}-1) < g_{m}(r_{m})$ for all integers $m\geq2$. This proves \eqref{Inequality1:Prop3.1}.

To prove \eqref{Inequality2:Prop3.1}, as explained right after Lemma \ref{Lemma3.4}, it suffices to show
\begin{equation*}
    X_{q+1}(q)=q^{2q+3}-\sum_{i=0}^{2q+2}a_{q+1,i}q^{i}<0    
\end{equation*}
for positive integers $q$. Since $a_{q+1,2q+2}=(q+1)^{2}+(q+1)+\frac{1}{3}(2^{2q+3}-5)$ (by Lemma \ref{Lemma3.6}) is strictly greater than $q$ and $a_{q+1,j}\geq0$ for integers $0\leq j\leq 2q+1$ (by Lemma \ref{Lemma3.7}), we have
\begin{equation*}
    X_{q+1}(q)=q^{2q+3}-\sum_{i=0}^{2q+2}a_{q+1,i}q^{i}\leq q^{2q+3}-a_{q+1,2q+2}q^{2q+2}=q^{2q+2}(q-a_{q+1,2q+2})<0.
\end{equation*}
This completes the proof of \eqref{Inequality2:Prop3.1} and Theorem \ref{MainThm} (b).
\end{proof}

\section{Asymptotic behavior of the maximum value}

In this section, we analyze the asymptotic behavior of the maximum value $g_{m}(r_{m})$ of the sequence $\big\{g_{m}(r)\big\}_{0\leq r\leq m}=\{\binom{2r}{r}^{-1}\sum_{i=0}^{r}\binom{m}{i}^2\}_{0\leq r\leq m}$ and prove Theorem \ref{MainThm} (c). If we solve \eqref{Equivalence1:Prop3.1} and \eqref{Equivalence2:Prop3.1} for $\sum_{i=0}^{r_{m}}\binom{m}{i}^2$, we get
\begin{equation*}
    \frac{r_{m}+1}{3r_{m}+1}\binom{m}{r_{m}+1}^2 < \sum_{i=0}^{r_{m}}\binom{m}{i}^2 < \frac{4r_{m}-2}{3r_{m}-2}\binom{m}{r_{m}}^2.
\end{equation*}
Since $\binom{m}{r_{m}+1}=\frac{m-r_{m}}{r_{m}+1}\binom{m}{r_{m}}$, the above inequality implies
\begin{equation}\label{Inequality for Asymptotic}
    \frac{r_{m}+1}{3r_{m}+1}\bigg(\frac{m-r_{m}}{r_{m}+1}\bigg)^2\frac{1}{\binom{2r_{m}}{r_{m}}}\binom{m}{r_{m}}^2 < g_{m}(r_{m})=\frac{1}{\binom{2r_{m}}{r_{m}}}\sum_{i=0}^{r_{m}}\binom{m}{i}^2 < \frac{4r_{m}-2}{3r_{m}-2}\frac{1}{\binom{2r_{m}}{r_{m}}}\binom{m}{r_{m}}^2.
\end{equation}
Using \eqref{Inequality for Asymptotic}, we now prove Theorem \ref{MainThm} (c): to do that, we take the limit (as $m\rightarrow \infty)$ to the inequality \eqref{Inequality for Asymptotic} and analyze the limiting behaviors of the left and right sides of the inequality.

\begin{proof}[Proof of Theorem \ref{MainThm} (c)]
Using $r_{m}=\big\lfloor\frac{m+2}{3}\big\rfloor$, one can show that $\displaystyle\lim_{m\rightarrow\infty}\frac{r_{m}+1}{3r_{m}+1}\big(\frac{m-r_{m}}{r_{m}+1}\big)^2=\lim_{m\rightarrow\infty}\frac{4r_{m}-2}{3r_{m}-2}=\frac{4}{3}$. Thus, by taking limit in \eqref{Inequality for Asymptotic}, we conclude that as $m\rightarrow\infty$,\footnote{$f(m)\sim g(m)$ means $\lim_{m\rightarrow\infty}\frac{f(m)}{g(m)}=1$.}
\begin{equation}\label{Asymptotic}
    g_{m}(r_{m})\sim \frac{4}{3}\frac{1}{\binom{2r_{m}}{r_{m}}}\binom{m}{r_{m}}^2=\frac{4}{3}\frac{(m!)^2}{[(m-r_{m})!]^2(2r_{m})!}.
\end{equation}
An application of Stirling's formula\footnote{$n!\sim\sqrt{2\pi n}\big(\frac{n}{e}\big)^n$ as $n\rightarrow\infty$.} yields
\begin{equation*}
\begin{aligned}
    \frac{4}{3}\frac{(m!)^2}{[(m-r_{m})!]^2(2r_{m})!} &\sim \frac{4}{3}\frac{2\pi m\big(\frac{m}{e}\big)^{2m}}{2\pi (m-r_{m})\big(\frac{m-r_{m}}{e}\big)^{2(m-r_{m})}\sqrt{4\pi r_{m}}\big(\frac{2r_{m}}{e}\big)^{2r_{m}}}\\
    &=\frac{2\sqrt{2}m^{2m+1}}{3\sqrt{\pi}(m-r_{m})^{2(m-r_{m})+1}(2r_{m})^{2r_{m}+\frac{1}{2}}}\\
    &=\frac{3^{2m+\frac{1}{2}}}{2^{2m}\sqrt{\pi m}}\cdot\bigg(\frac{2m}{3(m-r_{m})}\bigg)^{2(m-r_{m})+1}\cdot\bigg(\frac{m}{3r_{m}}\bigg)^{2r_{m}+\frac{1}{2}}.
\end{aligned}
\end{equation*}
Thus, to prove Theorem \ref{MainThm} (c), it suffices to show that
\begin{equation}\label{Asymptotic2}
    \bigg(\frac{2m}{3(m-r_{m})}\bigg)^{2(m-r_{m})+1}\cdot\bigg(\frac{m}{3r_{m}}\bigg)^{2r_{m}+\frac{1}{2}}\sim1
\end{equation}
holds. We can verify it by considering three classes of $m$ modulo $3$. Let $m=3k-\epsilon$, where $k$ is a positive integer and $\epsilon\in\{0,1,2\}$. Then, $r_{m}=\big\lfloor\frac{m+2}{3}\big\rfloor=\big\lfloor\frac{(3k-\epsilon)+2}{3}\big\rfloor=k$ for every $\epsilon\in\{0,1,2\}$. We first consider the cases where $\epsilon=1$ or $\epsilon=2$. In these cases, if we set $m=3k-\epsilon$ and $r_{m}=k$ in the left side of \eqref{Asymptotic2}, then as\footnote{We use the fact that $\lim_{n\rightarrow \infty}(1-\frac{1}{n})^{n}=e^{-1}.$} $k\rightarrow\infty$,
\begin{equation*}
\begin{aligned}
    \bigg(\frac{2m}{3(m-r_{m})}\bigg)^{2(m-r_{m})+1}\cdot\bigg(\frac{m}{3r_{m}}\bigg)^{2r_{m}+\frac{1}{2}}
    =~&\bigg(\frac{6k-2\epsilon}{6k-3\epsilon}\bigg)^{4k-2\epsilon+1}\cdot\bigg(\frac{3k-\epsilon}{3k}\bigg)^{2k+\frac{1}{2}}\\
    \sim~&\bigg(\frac{6k-2\epsilon}{6k-3\epsilon}\bigg)^{4k}\cdot\bigg(\frac{3k-\epsilon}{3k}\bigg)^{2k}\\
    =~&\bigg(\frac{6k-2\epsilon}{6k}\bigg)^{4k}\cdot\bigg(\frac{6k-3\epsilon}{6k}\bigg)^{-4k}\cdot\bigg(\frac{3k-\epsilon}{3k}\bigg)^{2k}\\
    =~&\bigg(1-\frac{\epsilon}{3k}\bigg)^{4k}\cdot\bigg(1-\frac{\epsilon}{2k}\bigg)^{-4k}\cdot\bigg(1-\frac{\epsilon}{3k}\bigg)^{2k}\\
    \sim~&e^{-\frac{4}{3}\epsilon}\cdot e^{2\epsilon}\cdot e^{-\frac{2}{3}\epsilon}\\
    =~&1.
\end{aligned}
\end{equation*}
On the other hand, when $\epsilon=0$, if we set $m=3k$ and $r_{m}=k$ in the left side of \eqref{Asymptotic2}, then
\begin{equation*}
    \bigg(\frac{2m}{3(m-r_{m})}\bigg)^{2(m-r_{m})+1}\cdot\bigg(\frac{m}{3r_{m}}\bigg)^{2r_{m}+\frac{1}{2}}
    =\bigg(\frac{6k}{6k}\bigg)^{4k+1}\cdot\bigg(\frac{3k}{3k}\bigg)^{2k+\frac{1}{2}}=1.
\end{equation*}
Thus, \eqref{Asymptotic2} is verified and this completes the proof.
\end{proof}

\section{Some comments on the Conjecture \ref{Conjecture}}

\subsection*{\textbf{1.}} 

Unlike Theorem \ref{MainThm} (a) of the current paper and Theorem 1.1 (a) in \cite{byun2024maximum}, unimodality of the generalized sequence \eqref{BP3} (Conjecture \ref{Conjecture} (b)) is not fully implied by log-concavity of the same sequence (Conjecture \ref{Conjecture} (a)). Note that in these two theorems, the sequences are log-concave for every integer $m\geq 2$, so we can conclude the unimodality of the same sequences for every $m\geq 2$. In general, the sequence \eqref{BP3} is (conjecturally) log-concave for $m$ large enough, so the unimodality of the sequence for small values of $m$ needs to be proved with a different technique.

We confirmed unimodality and checked log-concavity of~\eqref{BP3} for integer values $1\leq a\leq10$, $1\leq l\leq20$, and $1\leq m\leq100$. For each pair $(a, l)$, Table~\ref{table2} gives the largest value $m \leq 100$ for which the sequence~\eqref{BP3} is not log-concave. We note that the entries are weakly increasing along each column. Although not shown in the table, we further observe that for fixed $a$ and $m$, if the sequence is not log-concave for $l=L\in\mathbb{Z}^+$, then the sequence is still not log-concave for all integers $l\geq L$. In fact, the latter observation implies the former one, but we do not know how to explain either of these observations.

\begin{table}
\begin{tabular}
{ | P{0.8cm} | P{0.5cm} | P{0.5cm} | P{0.5cm} | P{0.5cm} | P{0.5cm} | P{0.5cm} | P{0.5cm} | P{0.5cm} | P{0.5cm} | P{0.5cm} | } 
\hline
$l$ $\backslash$ $a$ & 1 & 2 & 3 & 4 & 5 & 6 & 7 & 8 & 9 & 10\\
\hline
1 & 0 & 0 & 0 & 0 & 0 & 0 & 0 & 0 & 0 & 0\\
\hline
2 & 0 & 0 & 0 & 0 & 0 & 0 & 0 & 0 & 0 & 0\\
\hline
3 & 0 & 0 & 0 & 0 & 0 & 0 & 0 & 0 & 0 & 0\\
\hline
4 & 0 & 0 & 0 & 0 & 0 & 0 & 0 & 0 & 0 & 0\\
\hline
5 & 0 & 0 & 0 & 0 & 0 & 0 & 0 & 0 & 0 & 0\\
\hline
6 & 5 & 0 & 0 & 0 & 0 & 0 & 0 & 0 & 0 & 0\\
\hline
7 & 5 & 0 & 0 & 0 & 0 & 0 & 0 & 0 & 0 & 0\\
\hline
8 & 9 & 0 & 0 & 0 & 0 & 0 & 0 & 0 & 0 & 0\\
\hline
9 & 12 & 7 & 10 & 0 & 0 & 0 & 0 & 0 & 0 & 0\\
\hline
10 & 13 & 7 & 10 & 0 & 0 & 0 & 0 & 0 & 0 & 0\\
\hline
11 & 16 & 10 & 15 & 13 & 15 & 0 & 0 & 0 & 0 & 0\\
\hline
12 & 19 & 12 & 15 & 18 & 21 & 24 & 28 & 0 & 0 & 0\\
\hline
13 & 20 & 14 & 20 & 19 & 21 & 25 & 28 & 32 & 35 & 39\\
\hline
14 & 24 & 16 & 20 & 24 & 28 & 25 & 29 & 32 & 36 & 39\\
\hline
15 & 27 & 17 & 25 & 25 & 28 & 33 & 37 & 42 & 46 & 40\\
\hline
16 & 30 & 20 & 25 & 25 & 29 & 33 & 38 & 42 & 47 & 51\\
\hline
17 & 31 & 21 & 30 & 31 & 35 & 41 & 46 & 42 & 47 & 52\\
\hline
18 & 35 & 25 & 30 & 31 & 36 & 41 & 46 & 52 & 57 & 63\\
\hline
19 & 39 & 25 & 35 & 37 & 42 & 41 & 47 & 52 & 58 & 63\\
\hline
20 & 42 & 29 & 35 & 37 & 43 & 49 & 55 & 62 & 68 & 64\\
  \hline
\end{tabular}
\vspace{3mm}
    \caption{The $(i+1,j+1)$ entry of this table is the largest value of $m$ no greater than $100$ such that the sequence \eqref{BP3} is not log-concave when $l=i$ and $a=j$.}
    \label{table2}
\end{table}

\subsection*{\textbf{2.}} It would be interesting to find an alternative proof of Theorem \ref{MainThm}, which could lead us to a proof of Conjecture \ref{Conjecture}. As explained in Section~\ref{sec:Intro}, our proof heavily relies on the fact that when $l=2$ and $a=1$, the denominator of the sequence \eqref{BP3}, $\sum_{i=0}^{r}\{\binom{r}{i}a^{i}\}^{l}$, is simplified to $\binom{2r}{r}$. For this reason, the proof of this paper cannot be adapted to prove Conjecture \ref{Conjecture} in general. 

Meanwhile, in their recent work \cite{glasby2024maximizing}, Glasby and Paseman provided a new proof of their previous result \cite{glasby2022maximum} and generalized it in a different direction than the authors. More specifically, they characterized the peak location of the sequence
\begin{equation*}
    \bigg\{\frac{1}{\omega^r}\sum_{i=0}^{r}\binom{m}{r}\bigg\}_{0\leq r\leq m}
\end{equation*}
for a nonnegative integer $m$ and a real number $\omega\geq1$, thus generalizing their previous result\footnote{The previous result corresponds to the case when $\omega=2$.}. Their proof was interesting, as they used a completely different approach from their previous work \cite{glasby2022maximum}. For the proof, they first expressed $Q\coloneqq(r+1)\binom{m}{r+1}/\sum_{i=0}^{r}\binom{m}{i}$ as a generalized continued fraction. Then, they obtained good approximations of $Q$ based on the generalized continued fraction and used them to find a value of $r$ that maximizes the sequence above. It would be interesting if one could use their approach to give a simpler proof of Theorem \ref{MainThm} or to prove other cases of Conjecture \ref{Conjecture}.

\bibliography{bibliography}{}
\bibliographystyle{abbrv}

\end{document}